\newtheorem{theorem}{Theorem}[section]
\newtheorem{proposition}[theorem]{Proposition}
\newtheorem{corollary}[theorem]{Corollary}
\theoremstyle{definition}
\newtheorem{definition}[theorem]{Definition}
\newtheorem{remark}[theorem]{Remark}
\numberwithin{equation}{section}
\author{U. Bekbaev}
\address{
Department of Science in Engineering, Faculty of Engineering,\\
IIUM, 50728, KL, Malaysia\\
}
\email{bekbaev@iium.edu.my}
\thanks{This research is supported by the MOHE Malaysia under grant FRGS14-153-0394.}
\keywords{linear partial differential equation; change of variables; equivalence; invariant; fiber bundled space}
\subjclass{Primary 35A99, Secondary  35J15, 35L10, 12H20}
\begin{document}
\begin{center}\small{In the name of Allah, Most gracious, Most Merciful.}\end{center}
\vspace{2cm}
\title[On classification and invariants]{On classification and invariants of second order non-parabolic linear partial differential equations in two variables}

\begin{abstract}
       The paper deals with second order abstract linear partial differential equations (LPDE) over a partial differential field with two commuting differential operators. In terms of usual differential equations the main content can be presented as follows. The classification and invariants problems for second order LPDEs with respect to transformations \[x=x(\xi,\eta),\ y=y(\xi,\eta),\ u=h(x,y)v(\xi,\eta),\] where $x,y$ are independent and $u$ is the dependent variable of the LPDE, are considered. Solutions to these problems are given for different subclasses of non-parabolic LPDE which appear naturally in the equivalence problem of LPDE. A criterion for reducibility of such LPDE to LPDE with constant coefficients is offered as well. \end{abstract}

\maketitle


\section{Introduction}

 Let \[ A(x,y)u_{xx}+ B(x,y)u_{xy}+
C(x,y)u_{yy}+ a(x,y)u_{x}+ b(x,y)u_{y}+c(x,y)u= 0 \] be a second order
linear partial differential equation (LPDE) over the ring \\$
C^{\infty}(\mathbf{R^2}, \frac{\partial}{\partial x},
\frac{\partial}{\partial y})$. Under the transformation of dependent variable
 $ v=h(x,y)^{-1}u$ and independent variables $ \xi = \xi(x,y), \eta = \eta(x,y) $ it
transforms to an analogous (equivalent) one \[ A_{1}(\xi,\eta)v_{\xi\xi}+
B_{1}(\xi,\eta)v_{\xi\eta}+ C_{1}(\xi,\eta) v_{\eta\eta}+
a_{1}(\xi,\eta)v_{\xi}+ b_{1}(\xi,\eta)v_{\eta}+ c_{1}(\xi,\eta)v= 0, \] and
equality $\delta= g^{-1}\partial$- the chain rule is valid, where $\partial $ is column vector with
"coordinates"\ $ \partial^{1}= \partial /\partial x$, $ \partial
^{2}=
\partial /\partial y$, $\delta $ is column vector with
"coordinates" \ $\delta^{1}= \partial /\partial \xi$, $\delta^{2}=
\partial /\partial \eta $, $g$ is matrix $g= (g^i_j)_{i,j=
1,2}$ with $g^1_1= \xi_{x}, g^1_2= \eta_{x}, g^2_1=\xi_{y},
g^2_2= \eta_{y}$ and $u(x,y)=h(x,y)v(\xi(x,y),\eta(x,y))$.

A function $f^{\partial}(\mathbf{x})$ of $x_{1},x_{2},...,x_{6}$ and a
finite number of their partial derivatives is said to be invariant (relative invariant) if \[f^{\delta}(V_1)=f^{\partial}(V) \  (\mbox{respectively,} \  f^{\delta}(V_1)=h(x,y)^k\Delta(x,y)^lf^{\partial}(V)) \] for any
$V= (A(x,y), B(x,y),
C(x,y), a(x,y), b(x,y), c(x,y))$ from the domain of $f^{\partial}(\mathbf{x})$, non-vanishing $h(x,y)$ and change of independent variables $ \xi = \xi(x,y), \eta = \eta(x,y) $, where
\[V_1= (A_{1}(\xi,\eta),
B_{1}(\xi,\eta),
C_{1}(\xi,\eta), a_{1}(\xi,\eta),
b_{1}(\xi,\eta), c_{1}(\xi,\eta)),\] $k,l$ are fixed integers and $\Delta(x,y)=\xi_x\eta_y-\xi_y\eta_x$.
Such invariants are important in the classification theory of
differential equations.  For example usage of a single relative
invariant $D\langle V\rangle= B^2-4AC$, namely the
discriminant, gives us a rough classification of the second order real LPDE (elliptic, hyperbolic and
parabolic differential equations). In general every relative invariant $f^{\partial}(\mathbf{x})$ separates its own domain into two parts:
\[ \{V: f^{\partial}(V)\neq 0\} \ \ \mbox{and} \ \  \{V: f^{\partial}(V)= 0\}\] and never an element of the first set and an element of the second one can be equivalent to the same LPDE.

We will consider an abstract algebraic analog of the above situation. For that we use some notations, terminology and results from the theory of differential algebra which one can find
in \cite{Ko}.

Let $(F, \partial^{1}, \partial^{2})$ be any differential field with the commuting differential operators $\partial^{1}, \partial^{2}.$ Consider
  \[  GL^{\partial}(2,F)= \{ g= (g^i_j)_{i,j=1,2} \in
GL(m,F): \partial^{k}g^i_j=
 \partial^{i}g^k_j\ \mbox{ at} \  i,j,k= 1,2\} \] and
any second order linear partial differential equation
\begin{eqnarray}
A(\partial^1)^{2}u+ B\partial^{1}\partial^{2}u+C(\partial^2)^2u+ a\partial^{1}u+ b\partial^{2}u +cu= 0,
\end{eqnarray} where $A,B,C,a,b,c $ are given elements of $F$ and $u$ is an unknown element of $F$.

By transformations $v=h^{-1}u$,
 $\delta=g^{-1}\partial$,
where  $g\in  GL^{\partial}(2,F),$ $h\in F^*=F\setminus \{0\}$, $\partial $ is column
vector with \ "coordinates"\ $
\partial^{1}, \partial^{2}$ and $\delta $
is column vector with \ "coordinates"\ $\delta^{1},
\delta^{2}$, it is
transformed to an analogous  one
\begin{eqnarray}
A_{1}(\delta^1)^{2}v+ B_{1}\delta^{1}\delta^{2}v+C_{1}(\delta^{2})^{2}v+ a_{1}\delta^1v+ b_1\delta^{2}v+ c_1v= 0.
\end{eqnarray}
Moreover the coefficients of these equations are related in the following way.
\begin{eqnarray}
\left\{ \begin{array}{l}
A_{1}=h(A(g^1_1)^{2}+ Bg^1_1g^2_1+ C(g^2_1)^{2}),   \\
B_{1}=h(2Ag^1_1g^1_2+ B(g^1_1g^2_2+ g^1_2g^2_1)+ 2Cg^2_1g^2_2),  \\
C_{1}=h(A(g^1_2)^{2}+ Bg^1_2g^2_2+ C(g^2_2)^{2}), \\
a_{1}=h(A\partial^{1}g^1_1+ B\partial^{1}g^2_1+ C\partial^{2}g^2_1+
 ag^1_1+ bg^2_1)+ 2A\partial^1hg^1_1+ \\ B(\partial^1hg^2_1+ \partial^2hg^1_1)+ 2C\partial^2hg^2_1,\\
b_{1}=h(A\partial^{1}g^1_2+ B\partial^{1}g^2_2+ C\partial^{2}g^2_2+
ag^1_2+ bg^2_2)+ 2A\partial^1hg^1_2+ \\ B(\partial^1hg^2_2+ \partial^2hg^1_2)+ 2C\partial^2hg^2_2,\\
c_1=A(\partial^1)^2h+ B\partial^1\partial^2h+ C(\partial^2)^2h+ a\partial^1h+ b\partial^2h+ ch.
\end{array} \right.
\end{eqnarray}

\begin{definition} Equations $(1.1)$ and $(1.2)$ are said to be $G=F^*\times GL^{\partial}(2,F)$ -equivalent if there exist $h\in F^*$, $g\in GL^{\partial}(2,F)$ such that $ \delta =g^{-1}\partial $ and system of equalities $(1.3)$ is valid. In such case we write  $V_1=
\tau^{\partial}\langle h, g;V\rangle$, where \\ $V_{1}=
(A_{1},B_{1},C_{1},a_{1},b_{1},c_1),\ V= (A,B,C,a,b,c)$.\end{definition}

For the convenience even in the abstract differential field case we
use
\[ u_x \ \ (u_y, \ \ u_{\xi}, \ \ u_{\eta})\quad
\mbox{to denote} \quad \partial^1u \ \ (\mbox{respectively,}
\ \
\partial^2u, \ \ \delta^1u ,
 \ \ \delta^2u).\]

Let $C=\{a\in F: \partial^1a=\partial^2a= 0\}$ be the constant field for $(F, \partial^1, \partial^2)$ and $\mathbf{x}$ stand for the variable vector $(x_{1},x_{2},...,x_{6})$. The set of all linear partial differential equations (1.1) over $F$ of order $\leq 2$ is nothing than $F^6$. Whenever we speak about topology we mean the $\partial$- differential Zariski topology \cite{Ko}. If $W_e\subset F^6$ is an irreducible closed subset we use $C\langle W_e; \partial\rangle $ to denote the field of $\partial$-differential rational functions on $W_e$ with constant coefficients. We use $C\{\mathbf{x}; \partial\}$ to denote the $\partial$-differential ring of $\partial$-differential polynomials in independent variables $(x_{1},x_{2},...,x_{6})$
 over $C$ and
$C\langle \mathbf{x}; \partial\rangle $ for the field of $\partial$-differential rational functions in $\mathbf{x}$ over $C$.

\begin{definition} A differential rational function
$f^{\partial}\langle  \mathbf{x}\rangle\in C\langle W_e; \partial\rangle $ is called to be a $G$ -
invariant (relative invariant) if the equality
\[ f^{g^{-1}\partial}\langle  \tau^{\partial}\langle h,g;\mathbf{x}\rangle\rangle= f^{\partial}\langle \mathbf{x}\rangle \ \  \ (\mbox{respectively},\  f^{g^{-1}\partial}\langle  \tau^{\partial}\langle h,g;\mathbf{x}\rangle\rangle=h^k\Delta^l f^{\partial}\langle \mathbf{x}\rangle) \]
is valid for any $(h,g)\in G$ whenever $f^{\partial}$ is defined at $\mathbf{x}\in W_e$, where $k,l$ are fixed integers and $\Delta= \det g$.\end{definition}

Let us denote the field of $F^*\times GL^{\partial}(2,F)$ -
invariant $\partial$-differential rational functions over $W_e$ by $C\langle W_e; \partial\rangle^G$.

The main aim of the present paper is the classification of second order non-parabolic LPDE with respect to the above considered action of $F^*\times GL^{\partial}(2,F)$ and description of the corresponding field of $F^*\times GL^{\partial}(2,F)$-invariant differential rational functions over $C$. In this paper it is done for some $G$ -invariant subsets of second order non-parabolic LPDE which appear naturally in their $G$-equivalence problem. A criterion for reducibility of such equations to LPDE with constant coefficients is offered as well.

For the second order generic LPDE the $G$-equivalence and invariants are considered in \cite{B1} without classification and reducibility to LPDE with constant coefficients problems. In particular in this paper all results of \cite{B1}, except for Theorem 3.1, are reobtained by the use of more general method than of \cite{B1} (see case a) considered in Section 3).
For the equivalence and invariants problems of such equations with respect only to the change of independent variables one can see \cite{B2, B3}.

 We would like to note that in spite of the importance of second order LPDE their equivalence and invariants problems have not been explored much.
As to the similar problems for the ordinary LDE they are considered by many authors, see for example \cite{Be, Kor, New, Sir, Ud, W}. For an algebraic approach, by means of differential algebra, to the equivalence and invariance problems of ordinary LDE one can see \cite{B4, B5, B6}.

The construction of the paper is as follows. Section 2 is about the key results which are presented in an appropriate form to investigate the classification and invariants problems for any order LPDE. Section 3 is about applications of Section 2 results to different subsets  of non-parabolic second order LPDE.

\section{Preliminaries}

In this section we use $l,m,n$ for any fixed natural numbers and prove some common results to use them in the next section to solve classification and invariants problems for the second order LPDE.

Let
$(F,\partial )$ stand for a field $F$ with fixed commuting system
of differential operators $\partial^1,\partial^2,. . .,\partial^m$ and $C$ be
its constant field i.e. \[C=\{ a\in F:\partial^1a=\partial^2a=. .
.=\partial^ma=0\},\] where $\partial$ stands for the column with "coordinates"\ $\partial^1,\partial^2,. . .,\partial^m$. Elements of $F^m$ are assumed to be written as row vectors and
\[GL^{\partial}(m,F)= \{ g\in  GL(m,F):
\partial^ig^j_k= \partial^jg^i_k \ \mbox{for} \ i,j,k=1,2,...,m\}.\]

\begin{proposition} If the system of differential
operators $\partial^1,. . .,\partial^m$ is linear independent over
$F$ then the differential operators $\delta^1,. .
.,\delta^m$, where $\delta= g^{-1}\partial, g\in GL(m,F)$, commute
with each other if and only if $  g\in GL^{\partial}(m,F)$.\end{proposition}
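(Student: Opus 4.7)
The plan is to compute the commutator $[\delta^i,\delta^j]$ directly, reduce it to a first-order operator using commutativity of the $\partial^k$, and then translate the resulting condition on $g^{-1}$ back to a condition on $g$.

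\textbf{Step 1: reduce $[\delta^i,\delta^j]$ to a first-order operator.} Write $h=g^{-1}$ with entries $h^i_k$, so $\delta^i=\sum_k h^i_k\partial^k$. For $f\in F$ I would expand
\[
\delta^i\delta^j(f)=\sum_{k,l}h^i_k(\partial^k h^j_l)\,\partial^l f+\sum_{k,l}h^i_k h^j_l\,\partial^k\partial^l f,
\]
and similarly for $\delta^j\delta^i(f)$. Because the $\partial^k$ commute, the second-order piece in the difference,
\[
\sum_{k,l}\bigl(h^i_k h^j_l-h^j_k h^i_l\bigr)\partial^k\partial^l,
\]
vanishes by symmetrizing in $(k,l)$. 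What remains is
\[
[\delta^i,\delta^j]=\sum_{l}\bigl(\delta^i(h^j_l)-\delta^j(h^i_l)\bigr)\partial^l.
\]

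\textbf{Step 2: apply the linear independence hypothesis.} Since $\partial^1,\dots,\partial^m$ are linearly independent over $F$, the operator $[\delta^i,\delta^j]$ is zero iff each coefficient vanishes, i.e.
\[
\delta^i(h^j_l)=\delta^j(h^i_l)\qquad (i,j,l=1,\dots,m). \tag{$\ast$}
\]

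\textbf{Step 3: translate $(\ast)$ into a condition on $g$.} Differentiating $gh=I$ gives $\partial^k h=-h(\partial^k g)h$, that is, $\partial^k h^j_l=-\sum_{p,q}h^j_p(\partial^k g^p_q)h^q_l$. Substituting into the definition of $\delta^i(h^j_l)$ and then swapping the dummy indices $k,p$ in one of the two terms, I expect to obtain
\[
\delta^i(h^j_l)-\delta^j(h^i_l)=\sum_{k,p,q}h^i_p h^j_k\bigl(\partial^k g^p_q-\partial^p g^k_q\bigr)h^q_l.
\]
So $(\ast)$ becomes the vanishing, for all $i,j,l$, of this triple sum.

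\textbf{Step 4: conclude.} If $g\in GL^{\partial}(m,F)$ then $\partial^k g^p_q-\partial^p g^k_q=0$ and $(\ast)$ holds trivially. Conversely, assuming $(\ast)$ holds, I multiply the identity of Step~3 by $g^a_i g^b_j g^l_c$ and sum over $i,j,l$. Because $\sum_i g^a_i h^i_p=\delta^a_p$ etc., the left side collapses to $\partial^b g^a_c-\partial^a g^b_c$, forcing $g\in GL^{\partial}(m,F)$.

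The only genuinely delicate point is the bookkeeping in Steps 1 and 3 (getting the cancellation of second-order terms right, and the correct skew-symmetric expression in $k,p$); everything else is formal, with the linear independence hypothesis used exactly once, to pass from operator equality to coefficient equality in Step 2.
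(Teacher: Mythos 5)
Your proof is correct: the cancellation of the second-order part of $[\delta^i,\delta^j]$ (antisymmetric coefficient against the symmetric $\partial^k\partial^l$), the identity $\delta^i(h^j_l)-\delta^j(h^i_l)=\sum_{k,p,q}h^i_p h^j_k\bigl(\partial^k g^p_q-\partial^p g^k_q\bigr)h^q_l$, and the contraction with $g^a_i g^b_j g^l_c$ all check out, so no gap remains. Your route is, however, the mirror image of the paper's. The paper writes $\partial=g\delta$ and expands $\partial^i\partial^j$ in the $\delta$'s, so the condition $\partial^i g^j_k=\partial^j g^i_k$ appears directly without ever differentiating $g^{-1}$; the direction from commutativity of the $\delta$'s to $g\in GL^{\partial}(m,F)$ then uses linear independence of the $\delta$'s (inherited from that of the $\partial$'s via invertibility of $g$), while the converse is handled by a separate argument: applying the surviving second-order terms to an arbitrary $a\in F$, writing them as $g(\delta^i\delta^j a)g^t=g(\delta^i\delta^j a)^t g^t$, and cancelling $g$ and $g^t$ to get symmetry of the matrix $(\delta^i\delta^j a)$. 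You instead expand $\delta=g^{-1}\partial$ in the $\partial$'s, so the linear-independence hypothesis is used literally as stated, and both implications come out of a single chain of equivalences: the $\delta$'s commute iff $\delta^i(h^j_l)=\delta^j(h^i_l)$ iff the $h$-conjugated skew part of $\partial g$ vanishes iff $g\in GL^{\partial}(m,F)$, the last step by invertibility of $g$. The price of your version is the extra bookkeeping of differentiating $g^{-1}$ in Step 3; the gain is a uniform treatment of both directions, with no need for the transpose/symmetric-matrix trick on an arbitrary $a$ and no need to remark that the $\delta$'s are themselves linearly independent.
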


\begin{proof} Let $g\in GL(m,F)$,\ $\delta= g^{-1}\partial $. It is
clear that linear independence of $\partial^1,. . .,\partial^m$
implies linear independence of $\delta^1,. . .,\delta^m.$ For any
$i,j=1,2,...,m$ we have $\partial^j=
\sum_{k=1}^mg^j_k\delta^k$, \[\partial^i\partial^j=
\sum_{k=1}^m(\partial^i(g^j_k)\delta^k+
g^j_k\partial^i\delta^k)= \sum_{k=1}^m\partial^i(g^j_k)\delta^k
+ \sum_{k=1}^m\sum_{s=1}^mg^j_kg^i_s\delta^s\delta^k.\] Therefore
due to $\partial^i\partial^j= \partial^j\partial^i $ one has
\begin{eqnarray}\begin{array}{c}
\sum_{k=1}^m\partial^i(g^j_k)\delta^k +
\sum_{k=1}^m\sum_{s=1}^mg^j_kg^i_s\delta^s\delta^k=\\
\sum_{k=1}^m\partial^j(g^i_k)\delta^k +
\sum_{k=1}^m\sum_{s=1}^mg^j_kg^i_s\delta^k\delta^s
\end{array}\end{eqnarray}

If $\delta^k\delta^s = \delta^s\delta^k$ for any $k,s=
1,2,...,m$ then due (2.1) one has
\[\sum_{k=1}^m\partial^i(g^j_k)\delta^k  =
\sum_{k=1}^m\partial^j(g^i_k)\delta^k\]  i.e.\ $\partial^i(g^j_k)=
\partial^j(g^i_k)$ for any  $i,j,k= 1,2,...,m$ because of linear independence of
$\delta^1,. . .,\delta^m$. Thus in this case
 $  g\in GL^{\partial}(m,F)$.

Vice versa, if $\partial^i(g^j_k)=
\partial^j(g^i_k)$ for any  $i,j,k= 1,2,...,m$  then due (2.1) at any $a\in F$ one has
\[\sum_{k=1}^m\sum_{s=1}^mg^j_kg^i_s\delta^s\delta^k a=
\sum_{k=1}^m\sum_{s=1}^mg^j_kg^i_s\delta^k\delta^s a\ \mbox{for any}
\ i,j= 1,2,...,m.\] These equalities can be written in the
following matrix form \[g(\delta^1\delta a,. . .,\delta^m\delta
a)g^t= g(\delta^1\delta a,. . .,\delta^m\delta a)^tg^t,\] where
$t$ means the transpose. Therefore $(\delta^1\delta a,. .
.,\delta^m\delta a)= (\delta^1\delta a,. . .,\delta^m\delta a)^t$
i.e.
 $\delta^k\delta^s a= \delta^s\delta^k a$ for any
$k,s= 1,2,...,m$, which completes the proof of Proposition 2.1.\end{proof}

Further it is assumed that the commuting system of differential
operators $\partial^1,. . .,\partial^m$ is linear independent over
$F$. Note that for any $g\in GL^{\partial}(m,F)$ and $\delta=
g^{-1}\partial$ the constant field of $(F;\delta)$ is the same $C$ and equality
\begin{eqnarray}\begin{array}{c} GL^{\delta}(m,F)=
g^{-1}GL^{\partial}(m,F) \end{array}\end{eqnarray} holds true.

Further each element $(h,g)\in GL(n,F)\times GL^{\partial}(m,F)$ is assumed to be presented as a block-diagonal matrix with the blocks "$h$"\ and\ "$g$"\ on the main diagonal.

We consider the following groupoid  $G$ (generated by $GL^{\partial}(m,F)$ and $GL(n,F)$) as a fiber bundled space with base \[\mathbf{B}=\{\delta: \mbox{there exists}\ g\in GL^{\partial}(m,F)\ \mbox{such that}\ \delta=g^{-1}\partial\},\] and fiber $GL(n,F)\times GL^{\delta}(m,F)$ over $\delta\in \mathbf{B}$ and maps $s: G\rightarrow \mathbf{B}$, $t: G\rightarrow \mathbf{B}$, where by definition $s(\delta,(h,g))=\delta$ and $t(\delta,(h,g))=g^{-1}\delta$. Whenever $s(\delta_1,(h_1,g_1))= t(\delta,(h,g))$ the product $(\delta,(h,g))(\delta_1,(h_1,g_1))=((g_1)^{-1}\delta,(hh_1, gg_1))$
is defined. All needed axioms of groupoid \cite{Br, We} are checked easily.

We consider also fiber bundled space $DE$ with the same base $\mathbf{B}$ and fiber $F^l=F_{\delta}^l$ over $\delta\in \mathbf{B}$, where $F_{\delta}$ means the field $F$ considered as $\delta$-differential field. The algebra of polynomials $C\{\mathbf{x};\partial\}$, where $\mathbf{x}=(x_1, x_2, ..., x_l)$, can be extended to the $C\{DE\}$-the algebra of differential polynomials on $DE$ in the following natural way: $f^{\partial}\{\mathbf{x}\}\in C\{\mathbf{x};\partial\}$ generates polynomial map $(f^{\delta}\{\mathbf{x}\})_{\delta\in \mathbf{B}}\in C\{DE\}$, where $f^{\delta}\{\mathbf{x}\} $ stands for the polynomial obtained by substitution $\delta$ for $\partial$ into $f^{\partial}\{\mathbf{x}\}$. In other words each element of $C\{DE\}$ when considered on $(\delta,F_{\delta}^l)$ is a $\delta$-differential polynomial and its coefficients do not depend on $\delta$. By $C\langle DE\rangle$ we denote the corresponding field of rational functions. Let us use $I_n$ to denote the $n$-order identity matrix and we use $e$ for $I_m$ also. We consider  $W=\bigcup_{\delta\in \mathbf{B}}(\delta,W_{\delta})\subset  DE$ a sub-fiber bundled space and $C\{W\}$, $C\langle W\rangle$ in the natural way.

Let $G\diamond W$ stand for the fiber bundled space with the same base $\mathbf{B}$ and fiber $GL(n,F)\times GL^{\delta}(m,F)\times W_{\delta}$ over $\delta$.

By algebraic ("anti") representation of $G$ on $W$ we mean a map $T:G\diamond W \rightarrow W $, where $T:(\delta, (h,g,v))\mapsto (g^{-1}\delta, \tau^{\delta}\langle h,g;v\rangle)$, such that
\begin{equation}\begin{array}{c} \tau^{g^{-1}\delta}\langle h_1,g_1; \tau^{\delta}\langle h,g;v\rangle\rangle= \tau^{\delta}\langle hh_1,gg_1;v\rangle\end{array}\end{equation}
whenever $g\in GL^{\delta}(m,F)$, $v\in W_{\delta}$, $h,h_1\in GL(n,F)$ and $g_1\in GL^{g^{-1}\delta}(m,F)$. It is assumed that $\tau^{\delta}\langle I_n,I_m;v\rangle=v$ for any $(\delta,v)$.

We say that \[W_0=\bigcup_{\delta\in \mathbf{B}}(\delta,W_{0\delta})\subset W\ \mbox{is invariant}\ (G-\mbox{invariant) if}\ \tau^{\delta}\langle h,g;v\rangle\in W_{0g^{-1}\delta}\] whenever $(h,g)\in GL(n,F)\times GL^{\delta}(m,F)$ and $\mathbf{v}\in W_{0\delta}
$.

 Further it is assumed that each component of  $(\tau^{\delta})_{\delta\in \mathbf{B}}$ is from $C\langle W\rangle$ that is a differential rational function in components of $h,g,v$ over $C$ .

{\it {\bf Assumption.} There exists a nonempty $G$-invariant subset $W_0$ of $W$ and a map $P=(P^{\delta})_{\delta\in \mathbf{B}}: W_0\rightarrow G$, where  $P: (\delta, v)\mapsto (\delta, P^{\delta}\langle\mathbf{v}\rangle)$, such that
 \begin{equation}P^{g^{-1}\delta}\langle\tau^{\delta}\langle h,g;\mathbf{v}\rangle\rangle=(h^{-1},g^{-1})P^{\delta}\langle\mathbf{v}\rangle\end{equation}
 whenever $\delta\in \mathbf{B}$, $\mathbf{v}\in W_{\delta}$ and $(h,g)\in GL(n,F)\times GL^{\delta}(m,F)$. It is also assumed that all components of $(P^{\delta}\langle\mathbf{v}\rangle)_{\delta\in \mathbf{B}}$ are differential rational functions in $\mathbf{v}$ over $C$.}

 In terms of components of $P^{\delta}\langle\mathbf{v}\rangle=(P_0^{\delta}\langle\mathbf{v}\rangle,P_1^{\delta}\langle\mathbf{v}\rangle)$ equality (2.4) can be written in the form:

 \[P_0^{g^{-1}\delta}\langle\tau^{\delta}\langle h,g;\mathbf{v}\rangle\rangle=h^{-1}P_0^{\delta}\langle\mathbf{v}\rangle,\ P_1^{g^{-1}\delta}\langle\tau^{\delta}\langle h,g;\mathbf{v}\rangle\rangle=g^{-1}P_1^{\delta}\langle\mathbf{v}\rangle.\]

Note that $G$-stabilizer of any $(\delta,\mathbf{v})\in W_0$ is trivial. The next result is a classification theorem for the elements of $W_0$ with respect to $G$.

\begin{theorem} Pairs $(\partial,\mathbf{u}), (\delta,\mathbf{v})\in W_0$ are $G$-equivalent, that is $\mathbf{v}=\tau^{\partial}\langle h,g;\mathbf{u}\rangle$ and $\delta=g^{-1}\partial$ for some $(h,g)\in GL(n,F)\times GL^{\partial}(m,F)$, if and only if \[P_1^{\partial}\langle\mathbf{u}\rangle^{-1}\partial=P_1^{\delta}\langle\mathbf{v}\rangle^{-1}\delta\ \ \mbox{and}\ \ \tau^{\partial}\langle P^{\partial}\langle\mathbf{u}\rangle ;\mathbf{u}\rangle=\tau^{\delta}\langle P^{\delta}(\mathbf{v}\rangle;\mathbf{v}\rangle.\]\end{theorem}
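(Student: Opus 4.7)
The plan is to use the map $P$ as a canonical ``normalizer.'' The intuition behind (2.4) is that $P^{\delta}\langle\mathbf{v}\rangle$ is exactly the groupoid element that drags $(\delta,\mathbf{v})$ to a distinguished representative of its orbit, so two pairs in $W_0$ should be $G$-equivalent iff the resulting representatives (both the base point $P_1^{\delta}\langle\mathbf{v}\rangle^{-1}\delta$ and the fiber value $\tau^{\delta}\langle P^{\delta}\langle\mathbf{v}\rangle;\mathbf{v}\rangle$) coincide. The entire proof is then a matter of threading (2.3) and (2.4) together; no new ingredient is needed.

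For the forward direction, assume $\delta=g^{-1}\partial$ and $\mathbf{v}=\tau^{\partial}\langle h,g;\mathbf{u}\rangle$. Direct substitution into (2.4) gives
\[
P^{\delta}\langle\mathbf{v}\rangle
 =\bigl(h^{-1}P_0^{\partial}\langle\mathbf{u}\rangle,\;g^{-1}P_1^{\partial}\langle\mathbf{u}\rangle\bigr),
\]
from which $P_1^{\delta}\langle\mathbf{v}\rangle^{-1}\delta=P_1^{\partial}\langle\mathbf{u}\rangle^{-1}g\cdot g^{-1}\partial=P_1^{\partial}\langle\mathbf{u}\rangle^{-1}\partial$. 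To obtain the second required equality I would rewrite $\tau^{\delta}\langle P^{\delta}\langle\mathbf{v}\rangle;\mathbf{v}\rangle = \tau^{g^{-1}\partial}\langle h^{-1}P_0^{\partial}\langle\mathbf{u}\rangle,\,g^{-1}P_1^{\partial}\langle\mathbf{u}\rangle;\,\tau^{\partial}\langle h,g;\mathbf{u}\rangle\rangle$ and apply the cocycle identity (2.3); the two pairs $(h,g)$ and $(h^{-1}P_0^{\partial}\langle\mathbf{u}\rangle,g^{-1}P_1^{\partial}\langle\mathbf{u}\rangle)$ multiply to $(P_0^{\partial}\langle\mathbf{u}\rangle,P_1^{\partial}\langle\mathbf{u}\rangle)=P^{\partial}\langle\mathbf{u}\rangle$, yielding $\tau^{\partial}\langle P^{\partial}\langle\mathbf{u}\rangle;\mathbf{u}\rangle$ on the nose.

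For the backward direction, abbreviate $(h_0,g_0)=P^{\partial}\langle\mathbf{u}\rangle$ and $(h_1,g_1)=P^{\delta}\langle\mathbf{v}\rangle$, and propose the candidate $h=h_0h_1^{-1}$, $g=g_0g_1^{-1}$. The first hypothesis reads $g_0^{-1}\partial=g_1^{-1}\delta$, i.e.\ $\delta=g_1g_0^{-1}\partial=g^{-1}\partial$; since $\delta$ already consists of commuting operators because $\delta\in\mathbf{B}$, Proposition~2.1 forces $g\in GL^{\partial}(m,F)$. To check $\mathbf{v}=\tau^{\partial}\langle h,g;\mathbf{u}\rangle$, factor via (2.3):
\[
\tau^{\partial}\langle h_0h_1^{-1},g_0g_1^{-1};\mathbf{u}\rangle
 =\tau^{g_0^{-1}\partial}\langle h_1^{-1},g_1^{-1};\,\tau^{\partial}\langle h_0,g_0;\mathbf{u}\rangle\rangle.
\]
The second hypothesis identifies the inner expression with $\tau^{\delta}\langle h_1,g_1;\mathbf{v}\rangle$ (noting $g_0^{-1}\partial=g_1^{-1}\delta$ is the correct base point), and one more application of (2.3) collapses $(h_1^{-1},g_1^{-1})(h_1,g_1)$ to $(I_n,I_m)$, leaving $\mathbf{v}$.

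The main obstacle is purely bookkeeping: every instance of $\tau$ is labelled by a base point in $\mathbf{B}$, and in order to apply (2.3) one must match source and target in the groupoid. The temptation to manipulate $(h_i,g_i)$ as though they lived in a single global group must be resisted; it is Proposition~2.1 that, in the end, certifies that the constructed $g$ lies in the fibre $GL^{\partial}(m,F)$ and not merely in $GL(m,F)$. Once this labelling discipline is kept, both directions reduce to applying (2.3) and (2.4) once or twice, and no further computation is required.
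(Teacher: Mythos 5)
Your argument is correct and follows essentially the same route as the paper: the forward direction by substituting (2.4) and collapsing with the cocycle identity (2.3), and the converse by taking $h=P_0^{\partial}\langle\mathbf{u}\rangle P_0^{\delta}\langle\mathbf{v}\rangle^{-1}$, $g=P_1^{\partial}\langle\mathbf{u}\rangle P_1^{\delta}\langle\mathbf{v}\rangle^{-1}$ and applying (2.3) twice together with $\tau^{\delta}\langle I_n,I_m;\mathbf{v}\rangle=\mathbf{v}$. The only cosmetic difference is that you certify $g\in GL^{\partial}(m,F)$ via Proposition 2.1, while the paper invokes the identity (2.2), $GL^{\delta}(m,F)=g^{-1}GL^{\partial}(m,F)$, for the same bookkeeping.
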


\begin{proof} If $\mathbf{v}=\tau^{\partial}\langle h,g;\mathbf{u}\rangle$ and $\delta=g^{-1}\partial$ then \[P_1^{\delta}\langle\mathbf{v}\rangle^{-1}\delta=(P_1^{g^{-1}\partial}\langle\tau^{\partial}\langle h,g;\mathbf{u}\rangle\rangle)^{-1}g^{-1}\partial=(g^{-1}P_1^{\partial}\langle \mathbf{u}\rangle)^{-1}g^{-1}\partial=P_1^{\partial}\langle \mathbf{u}\rangle^{-1}\partial\] and
\[\tau^{\delta}\langle P^{\delta}\langle\mathbf{v}\rangle;\mathbf{v}\rangle=\tau^{\delta}\langle P^{g^{-1}\partial}\langle\tau^{\partial}\langle h,g;\mathbf{u}\rangle\rangle;\mathbf{v}\rangle=
\tau^{\delta}\langle (h^{-1},g^{-1})P^{\partial}\langle\mathbf{u}\rangle;\mathbf{v}\rangle=\]
\[\tau^{g^{-1}\partial}\langle (h^{-1},g^{-1})P^{\partial}\langle\mathbf{u}\rangle;\tau^{\partial}\langle h,g;\mathbf{u}\rangle\rangle=\tau^{\partial}\langle P^{\partial}\langle\mathbf{u}\rangle;\mathbf{u}\rangle\]

Visa versa, if $P_1^{\partial}\langle\mathbf{u}\rangle^{-1}\partial=P_1^{\delta}\langle\mathbf{v}\rangle^{-1}\delta$ and $\tau^{\partial}\langle P^{\partial}\langle\mathbf{u}\rangle ;\mathbf{u}\rangle=\tau^{\delta}\langle P^{\delta}\langle\mathbf{v}\rangle;\mathbf{v}\rangle$  then for $h=P_0^{\partial}\langle\mathbf{u}\rangle P_0^{\delta}\langle\mathbf{v}\rangle^{-1}, g=P_1^{\partial}\langle\mathbf{u}\rangle P_1^{\delta}\langle\mathbf{v}\rangle^{-1}$ due to (2.2), (2.3) and the condition of the theorem  one has
\[\tau^{\partial}\langle h,g;\mathbf{u}\rangle=\tau^{\partial}\langle P_0^{\partial}\langle\mathbf{u}\rangle P_0^{\delta}\langle\mathbf{v}\rangle^{-1},
P_1^{\partial}\langle\mathbf{u}\rangle P_1^{\delta}\langle\mathbf{v}\rangle^{-1};\mathbf{u}\rangle=\]
\[\tau^{P_1^{\partial}\langle\mathbf{u}\rangle^{-1}\partial}\langle P_0^{\delta}\langle\mathbf{v}\rangle^{-1},P_1^{\delta}\langle\mathbf{v}\rangle^{-1};\tau^{\partial}
\langle P_0^{\partial}\langle\mathbf{u}\rangle,P_1^{\partial}\langle\mathbf{u}\rangle;\mathbf{u}\rangle\rangle=\]
\[\tau^{P_1^{\delta}\langle\mathbf{v}\rangle^{-1}\delta}\langle P_0^{\delta}\langle\mathbf{v}\rangle^{-1},P_1^{\delta}
\langle\mathbf{v}\rangle^{-1};\tau^{\delta}\langle P_0^{\delta}\langle\mathbf{v}\rangle,P_1^{\delta}\langle\mathbf{v}\rangle;\mathbf{v}\rangle\rangle=\tau^{\delta}\langle I_n, I_m;\mathbf{v}\rangle=\mathbf{v}.\] \end{proof}

This theorem means that for the canonical representative of $G$-equivalent to $(\partial, \mathbf{u})$ elements of $W_0$ one can take
\[(P_1^{\partial}\langle\mathbf{u}\rangle^{-1}\partial, \tau^{\partial}\langle P^{\partial}\langle\mathbf{u}\rangle ;\mathbf{u}\rangle).\]

The next result is a criterion for reducibility(equivalence) to an element with constant components.

\begin{theorem} An element $(\partial,\mathbf{u})\in W_{0}$ is $G$-equivalent to an element $(\delta,\mathbf{v})\in W_{0}$, where all components of $\mathbf{v}$ are constants, that is $\mathbf{v}\in C^l$, if and only if  all components of \[\tau^{\partial}\langle P^{\partial}\langle\mathbf{u}\rangle;\mathbf{u}\rangle\] are constants.\end{theorem}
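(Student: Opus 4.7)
The plan is to prove both directions by combining Theorem 2.2 (the classification of $W_0$ via the canonical pair $(P_1^{\partial}\langle\mathbf{u}\rangle^{-1}\partial,\tau^{\partial}\langle P^{\partial}\langle\mathbf{u}\rangle;\mathbf{u}\rangle)$) with the standing hypothesis that every component of $P^{\delta}$ and of $\tau^{\delta}$ is a differential rational function with coefficients in the constant field $C$. The single analytic fact driving both implications is that a differential rational function over $C$ carries constant inputs to constant outputs: $\delta$-derivatives of constants vanish, and the remaining algebraic operations stay inside $C$.

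For the forward implication, assume $(\partial,\mathbf{u})$ is $G$-equivalent to some $(\delta,\mathbf{v})\in W_0$ with $\mathbf{v}\in C^l$. Theorem 2.2 gives
\[
\tau^{\partial}\langle P^{\partial}\langle\mathbf{u}\rangle;\mathbf{u}\rangle=\tau^{\delta}\langle P^{\delta}\langle\mathbf{v}\rangle;\mathbf{v}\rangle.
\]
Since $\mathbf{v}$ has constant components and $P^{\delta}$ is $C$-rational in $\mathbf{v}$, the pair $P^{\delta}\langle\mathbf{v}\rangle$ has all components in $C$; substituting this constant pair together with the constant $\mathbf{v}$ into the (again $C$-rational) expression $\tau^{\delta}$ yields an element of $C^l$, so the left-hand side has constant components, as claimed.

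For the reverse implication I would exhibit the equivalence explicitly. Set $(h,g):=P^{\partial}\langle\mathbf{u}\rangle\in GL(n,F)\times GL^{\partial}(m,F)$, $\delta:=g^{-1}\partial$, and $\mathbf{v}:=\tau^{\partial}\langle h,g;\mathbf{u}\rangle=\tau^{\partial}\langle P^{\partial}\langle\mathbf{u}\rangle;\mathbf{u}\rangle$. Then $(\partial,\mathbf{u})$ is by construction $G$-equivalent to $(\delta,\mathbf{v})$; the $G$-invariance of $W_0$ forces $(\delta,\mathbf{v})\in W_0$; and by the hypothesis we have $\mathbf{v}\in C^l$. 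This $(\delta,\mathbf{v})$ is the desired element.

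The principal point to be careful about is that $P^{\delta}$ is actually defined at $\mathbf{v}$ so that the expression $P^{\delta}\langle\mathbf{v}\rangle$ in the forward direction makes sense; this is granted for free by the standing assumption that $P$ is defined on all of $W_0$ together with $(\delta,\mathbf{v})\in W_0$. The other delicate observation — that $C$-rational differential operations preserve constancy — is genuine but elementary, and with it in hand everything else is a direct transcription of Theorem 2.2 and the $G$-invariance of $W_0$.
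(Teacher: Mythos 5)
Your proof is correct, and the easy direction (canonical form constant $\Rightarrow$ equivalent to a constant element, via taking $(\delta,\mathbf{v})=(P_1^{\partial}\langle\mathbf{u}\rangle^{-1}\partial,\ \tau^{\partial}\langle P^{\partial}\langle\mathbf{u}\rangle;\mathbf{u}\rangle)$ and invoking $G$-invariance of $W_0$) is exactly what the paper dismisses as evident. For the nontrivial direction, however, you take a genuinely different route: you cite Theorem 2.2 to identify $\tau^{\partial}\langle P^{\partial}\langle\mathbf{u}\rangle;\mathbf{u}\rangle$ with $\tau^{\delta}\langle P^{\delta}\langle\mathbf{v}\rangle;\mathbf{v}\rangle$ and then just observe that $C$-rational differential expressions evaluated at constant arguments yield constants. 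The paper instead argues directly, without Theorem 2.2: from constancy of $P^{g^{-1}\partial}\langle\tau^{\partial}\langle h,g;\mathbf{u}\rangle\rangle=(h^{-1},g^{-1})P^{\partial}\langle\mathbf{u}\rangle$ it derives $(\partial^i h\,h^{-1},\partial^i g\,g^{-1})=\partial^i P^{\partial}\langle\mathbf{u}\rangle\,P^{\partial}\langle\mathbf{u}\rangle^{-1}$, uses uniqueness of solutions of this logarithmic-derivative system up to a right constant factor to get $(h,g)=P^{\partial}\langle\mathbf{u}\rangle(h_0,g_0)$ with $(h_0,g_0)$ constant, and then applies the composition law (2.3). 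Your argument is shorter and leans on the already-proved classification theorem plus the same constancy-preservation fact both proofs need for $P^{\delta}\langle\mathbf{v}\rangle$; the paper's argument is self-contained and yields as a byproduct the structural fact that every $(h,g)$ carrying $\mathbf{u}$ to a constant element is a constant right-translate of $P^{\partial}\langle\mathbf{u}\rangle$, so the constant form is unique up to constant transformations. Both are valid; just make sure, as you did, to note that $P^{\delta}\langle\mathbf{v}\rangle$ is defined because $(\delta,\mathbf{v})\in W_0$ is part of the hypothesis.
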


\begin{proof} The "If" part. If $\mathbf{v}=\tau^{\partial}\langle h,g;\mathbf{u}\rangle\in C^l$ for some $(h,g)\in GL(n,F)\times GL^{\partial}(m,F) $ then all entries of matrix $P^{g^{-1}\partial}\langle\tau^{\partial}\langle h,g;\mathbf{u}\rangle\rangle=(h^{-1},g^{-1})P^{\partial}\langle\mathbf{u}\rangle$ are in $C$ as well. That is at any $i=1,2,...,m$ one has \[0=\partial^i((h^{-1},g^{-1})P^{\partial}\langle\mathbf{u}\rangle)=-(h^{-1}\partial^i hh^{-1},g^{-1}\partial^i g g^{-1})P^{\partial}\langle\mathbf{u}\rangle+(h^{-1},g^{-1})\partial^i P^{\partial}\langle\mathbf{u}\rangle\] which implies equality \[(\partial^i hh^{-1},\partial^i g g^{-1})=\partial^i P^{\partial}\langle\mathbf{u}\rangle P^{\partial}\langle\mathbf{u}\rangle^{-1}.\]
Therefore one has $(h,g)=P^{\partial}\langle\mathbf{u}\rangle(h_0,g_0)$ for some $h_0\in GL(n,C)$ and $g_0\in GL(m,C)\subset GL^{\partial}(m,F)$ as far as the solution of the system
\[\partial^i XX^{-1}=\partial^i P^{\partial}\langle\mathbf{u}\rangle P^{\partial}\langle\mathbf{u}\rangle^{-1}, \ i=1,2,...,m\] is unique up to such product. The equality \[\mathbf{v}=\tau^{\partial}\langle h,g;\mathbf{u}\rangle=\tau^{g_0^{-1}\partial}\langle h_0,g_0;\tau^{\partial}\langle P^{\partial}\langle\mathbf{u}\rangle;\mathbf{u}\rangle\rangle\] indicates that $\tau^{\partial}\langle P^{\partial}\langle\mathbf{u}\rangle;\mathbf{u}\rangle\rangle=\tau^{\partial}\langle h_0^{-1},g_0^{-1};\mathbf{v}\rangle\in C^l$ as well.

The "only if" part is evident. \end{proof}

 Further in this section $(F,\partial)$ is considered to be a differential-algebraic closed field of characteristic zero, $W_{\delta}$ is assumed to be a closed, irreducible (in the differential Zariski topology) subset of $F_{\delta}^l$ and $W_{0\delta}$ is dense in $W_{\delta}$.

The following set $C\langle W\rangle^G=\{f=(f^{\delta}\langle \mathbf{x}\rangle)_{\delta}\in C\langle W\rangle:$  \[f^{g^{-1}\delta}\langle \tau^{\delta}\langle h, g;\mathbf{v}\rangle\rangle=f^{\delta}\langle \mathbf{v}\rangle\ \mbox{whenever}\ (h,g,v)\in GL(n,F)\times GL^{\delta}(m,F)\times W_{\delta}, \delta\in \mathbf{B}\}\] is said to be the field of $G$-invariant rational functions on $W$. From the very definition it is clear that
any $f=(f^{\delta}\langle \mathbf{x}\rangle)_{\delta}\in C\langle W\rangle^G$ is fully defined by its fiber $f^{\partial}\langle \mathbf{x}\rangle$. Therefore for this set one can use more informative and transparent notation  $C\langle W_e;\partial\rangle^{GL(n,F)\times GL^{\delta}(m,F)}$ or $C\langle W_e;\partial\rangle^G$- the field of $G$-invariant rational functions on $W_e$.

\begin{theorem} The field of $G$-invariant rational functions $C\langle W_e;\partial\rangle^G$ is invariant with respect to the commuting system of differential operators $\delta^1, \delta^2,
..., \delta^m$, where $\delta=P_1^{\partial}\langle\mathbf{x}\rangle^{-1}\partial$, and as such differential field it is generated over $C$ by the system of components of $\tau^{\partial}\langle P^{\partial}\langle\mathbf{x}\rangle ;\mathbf{x}\rangle$ that is  \[C\langle W_e;\partial\rangle^G=C\langle\tau^{\partial}\langle P^{\partial}\langle\mathbf{x}\rangle ;\mathbf{x}\rangle; \delta\rangle.\]\end{theorem}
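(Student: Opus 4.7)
The plan is to establish both inclusions in
\[
C\langle W_e;\partial\rangle^G=C\langle\tau^{\partial}\langle P^{\partial}\langle\mathbf{x}\rangle;\mathbf{x}\rangle;\delta\rangle,
\]
and along the way to check that $\delta^1,\dots,\delta^m$ commute and leave $C\langle W_e;\partial\rangle^G$ stable. Commutativity is immediate from Proposition 2.1: since by hypothesis $P$ takes values in $G$, the matrix $P_1^{\partial}\langle\mathbf{x}\rangle$ belongs to $GL^{\partial}(m,\,\cdot\,)$ over the generic base $C\langle W_e;\partial\rangle$, so $\delta=P_1^{\partial}\langle\mathbf{x}\rangle^{-1}\partial$ is indeed a commuting system.

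For the inclusion $\supseteq$, I first verify that every component of $\tau^{\partial}\langle P^{\partial}\langle\mathbf{x}\rangle;\mathbf{x}\rangle$ is $G$-invariant. Setting $\mathbf{y}=\tau^{\partial}\langle h,g;\mathbf{x}\rangle$ and combining the split form of (2.4), $P^{g^{-1}\partial}\langle\mathbf{y}\rangle=(h^{-1},g^{-1})P^{\partial}\langle\mathbf{x}\rangle$, with the anti-representation identity (2.3) applied with $(h_1,g_1)=(h^{-1}P_0^{\partial}\langle\mathbf{x}\rangle,\,g^{-1}P_1^{\partial}\langle\mathbf{x}\rangle)$, the factors $(h,g)$ and $(h^{-1},g^{-1})$ telescope to the identity and give
\[
\tau^{g^{-1}\partial}\langle P^{g^{-1}\partial}\langle\mathbf{y}\rangle;\mathbf{y}\rangle=\tau^{\partial}\langle P^{\partial}\langle\mathbf{x}\rangle;\mathbf{x}\rangle.
\]
Next I observe that $\delta$ itself is \emph{intrinsic} under the $G$-action: using only the $P_1$-component of (2.4),
\[
P_1^{g^{-1}\partial}\langle\mathbf{y}\rangle^{-1}g^{-1}\partial=\bigl(g^{-1}P_1^{\partial}\langle\mathbf{x}\rangle\bigr)^{-1}g^{-1}\partial=P_1^{\partial}\langle\mathbf{x}\rangle^{-1}\partial=\delta.
\]
Thus the operator $\delta$ is the same whether read off $(\partial,\mathbf{x})$ or $(g^{-1}\partial,\mathbf{y})$, so applying $\delta^i$ to both sides of the invariance identity $f^{g^{-1}\partial}\langle\mathbf{y}\rangle=f^{\partial}\langle\mathbf{x}\rangle$ preserves it, yielding $\delta^if\in C\langle W_e;\partial\rangle^G$. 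Together these two observations prove $C\langle\tau^{\partial}\langle P^{\partial}\langle\mathbf{x}\rangle;\mathbf{x}\rangle;\delta\rangle\subseteq C\langle W_e;\partial\rangle^G$.

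For the reverse inclusion, the key move is to \emph{specialize} the invariance identity at $(h,g)=P^{\partial}\langle\mathbf{x}\rangle$, a legitimate element of $G$ over the generic base $C\langle W_e;\partial\rangle$ by the standing assumptions on $P$. Then $g^{-1}\partial=\delta$ and $\mathbf{y}=\tau^{\partial}\langle P^{\partial}\langle\mathbf{x}\rangle;\mathbf{x}\rangle$, so for any $f\in C\langle W_e;\partial\rangle^G$
\[
f^{\partial}\langle\mathbf{x}\rangle=f^{\delta}\langle\tau^{\partial}\langle P^{\partial}\langle\mathbf{x}\rangle;\mathbf{x}\rangle\rangle,
\]
where $f^{\delta}$ denotes the same formal differential rational expression as $f^{\partial}$ with $\partial$ replaced throughout by $\delta$. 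The right-hand side is manifestly a $\delta$-differential rational function over $C$ in the components of $\tau^{\partial}\langle P^{\partial}\langle\mathbf{x}\rangle;\mathbf{x}\rangle$, delivering $f\in C\langle\tau^{\partial}\langle P^{\partial}\langle\mathbf{x}\rangle;\mathbf{x}\rangle;\delta\rangle$.

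The step that will require most care is justifying this last specialization: one has to be sure that the entries of $P^{\partial}\langle\mathbf{x}\rangle$, viewed as differential rational functions on $W_e$, really assemble into an admissible pair in $GL(n,F)\times GL^{\partial}(m,F)$ over the extended field $C\langle W_e;\partial\rangle$, and that the invariance identity from Definition 1.2, which was stated for ordinary pairs $(h,g)\in G$, transfers to this generic specialization. Here the running hypotheses that $F$ is differential-algebraically closed of characteristic zero, that $W_\delta$ is closed and irreducible, and that $W_{0\delta}$ is dense in $W_\delta$ allow an identity verified pointwise on $W_0$ to be lifted to the generic point and then specialized back, closing the argument.
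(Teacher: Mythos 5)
Your proposal is correct and follows essentially the same route as the paper: the forward inclusion by checking $G$-invariance of the components of $\tau^{\partial}\langle P^{\partial}\langle\mathbf{x}\rangle;\mathbf{x}\rangle$ (and the intrinsic nature of $\delta$), and the reverse inclusion by specializing the invariance identity at $(h,g)=P^{\partial}\langle\mathbf{x}\rangle$. The justification you flag as the delicate point is handled in the paper exactly as you sketch it: the identity is first taken at honest group elements $P^{\partial}\langle\mathbf{u}\rangle$ for $\mathbf{u}\in W_{0e}$, extended by density of $W_{0e}$ in the irreducible set $W_e$ to a variable vector $\mathbf{y}$, and then specialized along the diagonal $\mathbf{y}=\mathbf{x}$.
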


\begin{proof} It is evident that all components of  $\tau^{\partial}\langle P^{\partial}\langle\mathbf{x}\rangle ;\mathbf{x}\rangle$ are in $ C\langle W_e;\partial\rangle^G$. If $f^{\partial}\langle\mathbf{x}\rangle=f^{g^{-1}\partial}\langle\tau^{\partial}\langle h,g ;\mathbf{x}\rangle\rangle$ for all $(h,g)\in GL(n,F)\times GL^{\partial}(m,F)$ then, in particular, \[f^{\partial}\langle\mathbf{x}\rangle=f^{P_1^{\partial}\langle\mathbf{u}\rangle^{-1}\partial}\langle \tau^{\partial}\langle P_0^{\partial}\langle\mathbf{u}\rangle, P_1^{\partial}\langle\mathbf{u}\rangle;\mathbf{x}\rangle\rangle\]  whenever $\mathbf{u}\in W_{0e}$. It implies, as far as $W_{0e}$ is dense in $W_e$, that for the variable vector $\mathbf{y}\in W_0$ the equality \[f^{\partial}\langle\mathbf{x}\rangle=f^{P_1^{\partial}\langle\mathbf{y}\rangle^{-1}\partial}\langle \tau^{\partial}\langle P_0^{\partial}\langle\mathbf{y}\rangle, P_1^{\partial}\langle\mathbf{y}\rangle;\mathbf{x}\rangle\rangle\]
 holds true. In $\mathbf{y}=\mathbf{x}$ case one gets that $f^{\partial}\langle\mathbf{x}\rangle=f^{P_1^{\partial}\langle\mathbf{x}\rangle^{-1}\partial}\langle \tau^{\partial}\langle P_0^{\partial}\langle\mathbf{x}\rangle, P_1^{\partial}\langle\mathbf{x}\rangle;\mathbf{x}\rangle\rangle$. \end{proof}

\begin{corollary} The field $C\langle W_e;\partial\rangle$ is generated over $C\langle W_e;\partial\rangle^G$ as a $\delta=P_1^{\partial}\langle\mathbf{x}\rangle^{-1}\partial$-differential field by the system of components of $P^{\partial}\langle\mathbf{x}\rangle$ that is \[C\langle W_e;\partial\rangle=C\langle W_e;\partial\rangle^G \langle P^{\partial}\langle\mathbf{x}\rangle; \delta \rangle.\]\end{corollary}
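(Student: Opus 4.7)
The plan is to verify both inclusions, with the nontrivial direction $C\langle W_e;\partial\rangle\subseteq C\langle W_e;\partial\rangle^G\langle P^\partial\langle\mathbf{x}\rangle;\delta\rangle$ obtained by inverting the canonical-form transformation of Theorem 2.2.

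For the easier inclusion $\supseteq$, I first note that every component of $P^\partial\langle\mathbf{x}\rangle$ lies in $C\langle W_e;\partial\rangle$ by the standing hypothesis that $P$ has differential-rational components, and of course $C\langle W_e;\partial\rangle^G\subseteq C\langle W_e;\partial\rangle$. Since $\delta^i=\sum_j\bigl((P_1^\partial\langle\mathbf{x}\rangle)^{-1}\bigr)^i_j\partial^j$, any $\delta$-derivative of an element of $C\langle W_e;\partial\rangle$ remains in $C\langle W_e;\partial\rangle$, so the $\delta$-differential subfield generated by $C\langle W_e;\partial\rangle^G$ and the components of $P^\partial\langle\mathbf{x}\rangle$ is already contained in $C\langle W_e;\partial\rangle$.

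For the reverse inclusion, the key step is to invert the map $\mathbf{x}\mapsto\tau^\partial\langle P^\partial\langle\mathbf{x}\rangle;\mathbf{x}\rangle$ inside the groupoid action. Writing $h=P_0^\partial\langle\mathbf{x}\rangle$, $g=P_1^\partial\langle\mathbf{x}\rangle$ and $\delta=g^{-1}\partial$, equation (2.2) gives $g^{-1}=g^{-1}I_m\in g^{-1}GL^\partial(m,F)=GL^\delta(m,F)$, so $(h^{-1},g^{-1})$ is an admissible element of the fiber over $\delta$, and the composition axiom (2.3) applied with these data yields
\[\tau^\delta\langle h^{-1},g^{-1};\tau^\partial\langle h,g;\mathbf{x}\rangle\rangle=\tau^\partial\langle hh^{-1},gg^{-1};\mathbf{x}\rangle=\tau^\partial\langle I_n,I_m;\mathbf{x}\rangle=\mathbf{x}.\]
By Theorem 2.2 every component of $\tau^\partial\langle P^\partial\langle\mathbf{x}\rangle;\mathbf{x}\rangle$ lies in $C\langle W_e;\partial\rangle^G$; every component of $(h^{-1},g^{-1})=P^\partial\langle\mathbf{x}\rangle^{-1}$ is a rational function in the components of $P^\partial\langle\mathbf{x}\rangle$; and $\tau^\delta$ is given by $\delta$-differential rational expressions over $C$. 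Hence the displayed identity writes each coordinate of $\mathbf{x}$ as an element of $C\langle W_e;\partial\rangle^G\langle P^\partial\langle\mathbf{x}\rangle;\delta\rangle$.

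To conclude, the relation $\partial=g\delta=P_1^\partial\langle\mathbf{x}\rangle\delta$ shows that every $\partial^i$ is an $F$-linear combination of the $\delta^j$ with coefficients taken from the components of $P_1^\partial\langle\mathbf{x}\rangle$, so every iterated $\partial$-derivative of $\mathbf{x}$ already lies in $C\langle W_e;\partial\rangle^G\langle P^\partial\langle\mathbf{x}\rangle;\delta\rangle$, and therefore so does the whole field $C\langle W_e;\partial\rangle$. The main delicacy I anticipate is the groupoid bookkeeping: confirming that $P^\partial\langle\mathbf{x}\rangle^{-1}$ genuinely sits in the fiber over $\delta$ so that (2.3) is legally applicable, and then tracking carefully the passage between the $\partial$- and $\delta$-differential structures when rewriting derivatives.
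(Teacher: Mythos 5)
Your argument is correct and follows essentially the same route as the paper: the key steps — $\partial=P_1^{\partial}\langle\mathbf{x}\rangle\delta$ and the inversion $\tau^{\delta}\langle P^{\partial}\langle\mathbf{x}\rangle^{-1};\tau^{\partial}\langle P^{\partial}\langle\mathbf{x}\rangle;\mathbf{x}\rangle\rangle=\mathbf{x}$ via (2.2)--(2.3) — are exactly the identities the paper uses, with the paper merely invoking Theorem 2.4 to rewrite $C\langle W_e;\partial\rangle^G$ where you instead cite the invariance of the components of $\tau^{\partial}\langle P^{\partial}\langle\mathbf{x}\rangle;\mathbf{x}\rangle$ directly. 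Your extra bookkeeping (the easy inclusion and the check that $P^{\partial}\langle\mathbf{x}\rangle^{-1}$ lies in the fiber over $\delta$) only makes explicit what the paper leaves tacit.
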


\begin{proof} Due to Theorem 2.4 one has equality \[C\langle W_e;\partial\rangle^G\langle P^{\partial}\langle\mathbf{x}\rangle;P_1^{\partial}\langle\mathbf{x}\rangle^{-1}\partial\rangle= C\langle \tau^{\partial}\langle P^{\partial}\langle\mathbf{x}\rangle ;\mathbf{x}\rangle, P^{\partial}\langle\mathbf{x}\rangle; P_1^{\partial}\langle\mathbf{x}\rangle^{-1}\partial\rangle.\] Therefore to prove the corollary it is enough to show that $\partial$, as well as $\mathbf{x}$, can be expressed by $\delta=P_1^{\partial}\langle\mathbf{x}\rangle^{-1}\partial$ and the system of components of $P^{\partial}\langle\mathbf{x}\rangle$ and $\tau^{\partial}\langle P^{\partial}\langle\mathbf{x}\rangle ;\mathbf{x}\rangle$. Indeed $\partial=P_1^{\partial}\langle\mathbf{x}\rangle\delta$ and \[\tau^{P_1^{\partial}\langle\mathbf{x}\rangle^{-1}\partial}
\langle P^{\partial}\langle\mathbf{x}\rangle^{-1};\tau^{\partial}\langle P^{\partial}\langle\mathbf{x}\rangle;\mathbf{x}\rangle \rangle=\mathbf{x}.\]\end{proof}

\begin{theorem} The equality $\delta-\mbox{tr.deg.}C\langle W_e;\partial\rangle^G/C=\dim (W_e)-(n^2+m)$ holds true.\end{theorem}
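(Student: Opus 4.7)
Starting from Corollary~2.5, we have
\[
C\langle W_e;\partial\rangle \;=\; C\langle W_e;\partial\rangle^{G}\bigl\langle P^{\partial}\langle\mathbf{x}\rangle;\,\delta\bigr\rangle,
\]
with $\delta=P_1^{\partial}\langle\mathbf{x}\rangle^{-1}\partial$. Applying the additivity of $\delta$-differential transcendence degree in the tower $C\subset C\langle W_e;\partial\rangle^{G}\subset C\langle W_e;\partial\rangle$, the theorem reduces to the two equalities
\[
\delta\text{-tr.deg}\,C\langle W_e;\partial\rangle/C=\dim W_e \qquad \text{and} \qquad \delta\text{-tr.deg}\,C\langle W_e;\partial\rangle/C\langle W_e;\partial\rangle^{G}=n^{2}+m.
\]
For the first equality, since $\delta$ and $\partial$ are related by the invertible matrix $P_1^{\partial}\langle\mathbf{x}\rangle\in GL^{\partial}(m,F)$, they span the same $F$-module of derivations of $F$; consequently the $\partial$- and $\delta$-differential transcendence degrees over $C$ agree on any subfield of $F$, and the equality reduces to the very definition $\dim W_e=\partial$-tr.deg$\,C\langle W_e;\partial\rangle/C$.

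For the second equality I would compute matching upper and lower bounds. \emph{Upper bound.} The extension is generated by the $n^{2}+m^{2}$ entries of $P^{\partial}\langle\mathbf{x}\rangle=(P_{0}^{\partial},P_{1}^{\partial})$. The block $P_{0}^{\partial}\in GL(n,F)$ contributes at most $n^{2}$ independent $\delta$-differential generators. The block $P_{1}^{\partial}\in GL^{\partial}(m,F)$ obeys the integrability relations $\partial^{i}(P_{1})^{j}_{k}=\partial^{j}(P_{1})^{i}_{k}$; solving these via potentials $(P_{1})^{i}_{j}=\partial^{i}\phi_{j}$ shows that $GL^{\partial}(m,F)$ has differential dimension $m$, contributing at most $m$ further generators. \emph{Lower bound.} The trivial-stabilizer observation following the Assumption shows that the $G$-action on $W_{0}$ is free, so by Theorem~2.2 the map $(h,g)\mapsto\tau^{\partial}\langle h,g;\mathbf{u}\rangle$ from $G$ to the orbit of a generic $\mathbf{u}\in W_{0}$ is injective. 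The generic orbit therefore has differential dimension $\dim G=n^{2}+m$; since its defining ideal over $C\langle W_e;\partial\rangle^{G}$ is, by Theorem~2.4, cut out exactly by the conditions $\tau^{\partial}\langle P^{\partial}\langle\mathbf{x}\rangle;\mathbf{x}\rangle=\text{const}$, we obtain $\delta$-tr.deg$\,C\langle W_e;\partial\rangle/C\langle W_e;\partial\rangle^{G}\geq n^{2}+m$, matching the upper bound.

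The main obstacle will be the dimension count for $GL^{\partial}(m,F)$: verifying that the integrability system cuts the naive $m^{2}$-dimensional $GL(m,F)$ down to a $\partial$-differential variety of dimension exactly $m$ (via the potential parametrization), and then rigorously translating ``free action with equivariant $P$'' into additivity of differential transcendence degrees in the $\partial$-differential Zariski topology. Once these ingredients are in place, combining the two equalities with additivity yields
\[
\delta\text{-tr.deg}\,C\langle W_e;\partial\rangle^{G}/C \;=\; \dim W_e - (n^{2}+m),
\]
which is the claim.
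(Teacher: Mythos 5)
Your overall frame (Corollary 2.5, additivity of differential transcendence degree in the tower $C\subset C\langle W_e;\partial\rangle^G\subset C\langle W_e;\partial\rangle$, and an upper bound of $n^2+m$ coming from the $n^2$ entries of $P_0$ plus the ``potential'' parametrization of the matrix block) is the same frame the paper uses. Two caveats on the upper bound: the base field $C\langle W_e;\partial\rangle^G$ is a $\delta$-field but \emph{not} a $\partial$-field (applying $\partial^i$ to an invariant need not give an invariant), so your integrability relations $\partial^i(P_1)^j_k=\partial^j(P_1)^i_k$ for $P_1\in GL^{\partial}(m,F)$ cannot be used directly over that base; the paper instead passes to $P_1^{\partial}\langle\mathbf{x}\rangle^{-1}\in GL^{\delta}(m,\cdot)$, uses $C\langle W_e;\partial\rangle^G\langle P^{\partial}\langle\mathbf{x}\rangle;\delta\rangle=C\langle W_e;\partial\rangle^G\langle P^{\partial}\langle\mathbf{x}\rangle^{-1};\delta\rangle$, and bounds the $\delta$-transcendence degree by $n^2+m$ from the $\delta$-closedness relations. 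This is fixable, but as written it is the wrong differential structure.

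The genuine gap is in your lower bound, which is the heart of the theorem. Your argument ``free action $\Rightarrow$ generic orbit has differential dimension $\dim G=n^2+m$, and the generic fiber of the invariants is exactly an orbit cut out by $\tau^{\partial}\langle P^{\partial}\langle\mathbf{x}\rangle;\mathbf{x}\rangle=\mathrm{const}$'' does not go through in this setting: $G$ is a \emph{groupoid} acting across the fibers of the bundle, i.e.\ $\tau^{\partial}\langle h,g;\cdot\rangle$ sends $W_{\partial}$ to $W_{g^{-1}\partial}$, so the orbit of a generic $(\partial,\mathbf{u})$ does not lie inside $W_e=W_{\partial}$ at all, and by Theorem 2.2 equality of all invariant values does not characterize orbit membership — the extra condition $P_1^{\partial}\langle\mathbf{u}\rangle^{-1}\partial=P_1^{\delta}\langle\mathbf{v}\rangle^{-1}\delta$ is also required. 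Moreover the differential-algebraic facts you lean on (injective differential-rational maps preserve differential dimension; relative differential transcendence degree equals the dimension of a generic fiber) are neither proved nor cited, and you yourself flag them as ``the main obstacle.'' The paper closes exactly this gap by a direct computation: it shows that the $n^2+m$ elements $e_1P^{\partial}\langle\mathbf{x}\rangle$ (all of $P_0^{\partial}\langle\mathbf{x}\rangle$ together with the first row of $P_1^{\partial}\langle\mathbf{x}\rangle$) are $\delta$-algebraically independent over $C\langle W_e;\partial\rangle^G$. Any relation $p^{\delta}[e_1P^{\partial}\langle\mathbf{x}\rangle]=0$ is, since $\delta$ and the coefficients of $p$ are invariant under $\partial\mapsto g^{-1}\partial$, $\mathbf{x}\mapsto\tau^{\partial}\langle h,g;\mathbf{x}\rangle$, transported via (2.4) to $p_v^{\delta_v}[e_1(h^{-1},g^{-1})P^{\partial}\langle\mathbf{v}\rangle]=0$ for every $v\in W_{0e}$; solving $h_0=h^{-1}P_0^{\partial}\langle\mathbf{v}\rangle$, $\overline{g}=g^{-1}P_1^{\partial}\langle\mathbf{v}\rangle$ gives vanishing at \emph{all} $(h_0,\overline{g})\in GL(n,F)\times GL^{\delta_v}(m,F)$, and the specialization $\overline{g}=\delta_v\mathbf{u}$ together with the nonvanishing of $\det Z_0\det(\delta_v(z_1,\dots,z_m))$ forces $p$ to be the zero polynomial. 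To make your proposal a proof you would essentially have to reproduce this independence argument (or rigorously develop the orbit/fiber dimension theory you invoke), so as it stands the lower bound is missing.
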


 \begin{proof} We will show that the system \[P_0^{\partial}\langle\mathbf{x}\rangle, P_{1,1}^{\partial}\langle\mathbf{x}\rangle, P_{1,2}^{\partial}\langle\mathbf{x}\rangle,...,P_{1,m}^{\partial}\langle\mathbf{x}\rangle\] is $\delta$-algebraic independent over $C\langle W_e;\partial\rangle^G$, where
 $P_0^{\partial}\langle\mathbf{x}\rangle$ stands for all its components and
 $P_{1,1}^{\partial}\langle\mathbf{x}\rangle, P_{1,2}^{\partial}\langle\mathbf{x}\rangle,...,P_{1,m}^{\partial}\langle\mathbf{x}\rangle$ stands for the first row of the matrix $P_1^{\partial}\langle\mathbf{x}\rangle$. Note that the system can be presented in the form $e_1P^{\partial}\langle\mathbf{x}\rangle$, where
 $e_1$ stands for  $(I_n,1,0,0,...,0)$.

If $p^{\delta}[Z_0,z_1,...,z_m]$ is any $\delta$-differential polynomial over $C\langle W_e;\partial\rangle^G$ for which $p^{\delta}[ e_1P^{\partial}\langle\mathbf{x}\rangle]=0$
then it should remain be true with respect to the changes $\partial\mapsto g^{-1}\partial$ and $\mathbf{x}\mapsto \tau^{\partial}\langle h,g;\mathbf{x}\rangle$, where $h\in GL(n,F)$ and $g\in GL^{\partial}(m,F)$. As far as $\delta$ and the coefficients of $p^{\delta}[Z_0,z_1,...,z_m]$ are invariant with respect to such changes one has
\[ 0=p^{\delta}[e_1P^{\partial}\langle\mathbf{x}\rangle]=p^{\delta}[e_1P^{g^{-1}\partial}\langle\tau^{\partial}\langle h,g;\mathbf{x}\rangle\rangle]= p^{\delta}[e_1(h^{-1},g^{-1})P^{\partial}\langle\mathbf{x}\rangle]\] that is $p_v^{\delta_v}[e_1(h^{-1},g^{-1})P^{\partial}\langle\mathbf{v}\rangle]=0$ for any $v\in W_{0e}$, where $p_v^{\delta_v}[Z_0,z_1,...,z_m]$ stands for the polynomial obtained from $p^{\delta}[Z_0,z_1,...,z_m]$ by substitution $\mathbf{v}$ for $\mathbf{x}$. But for any $h_0\in GL(n,F)$ and $\overline{g}\in GL^{\delta_v}(m,F)$ the system $h_0=h^{-1}P_0^{\partial}\langle\mathbf{v}\rangle$,
$\overline{g}=g^{-1}P_1^{\partial}\langle\mathbf{v}\rangle$ has solution $h=P_0^{\partial}\langle\mathbf{v}\rangle h_0^{-1}\in GL(n,F)$, $g=P_1^{\partial}\langle\mathbf{v}\rangle \overline{g}^{-1}\in GL^{\partial}(m,F)$. Therefore $p_v^{\delta_v}[e_1(h_0,\overline{g})]=0$ for any $h_0\in GL(n,F)$ and $\overline{g}\in GL^{\delta_v}(m,F)$. In particular it implies that for any $\overline{g}=\delta_v\mathbf{u}\in  GL^{\delta_v}(m,F)$ the equality
\begin{equation} p_v^{\delta_v}[h_0,\delta_v^1u_1, \delta_v^1u_2,...,\delta_v^1u_m]=0\end{equation} holds true, where $\overline{g}=\delta_v\mathbf{u}=\delta_v(u_1,u_2,...,u_m)$ stands for the square matrix $(\delta_v^iu_j)_{i,j=1,2,...,m}$ . Note that if $p_v^{\delta_v}[Z_0,z_1,...,z_m]$ is nonzero then the same is true for $p_v^{\delta_v}[Z_0,\delta_v^1z_1,...,\delta_v^1z_m]$ . Equality (2.5) shows that the $\delta_v$-differential polynomial \[\det Z_0\det(\delta_v(z_1,z_2,...,z_m))p_v^{\delta_v}[Z_0,\delta_v^1z_1, \delta_v^1z_2,...,\delta_v^1z_m]\] vanishes at any $(Z_0,z_1,z_2,...,z_m)\in F^{n^2+m}$. As far as $\det Z_0\det(\delta_v(z_1,z_2,...,z_m))$ is a nonzero polynomial one can conclude that $p_v^{\delta_v}[Z_0,\delta_v^1z_1, \delta_v^1z_2,...,\delta_v^1z_m]$ has to be zero polynomial at any $\mathbf{v}\in W_{0e}$. Therefore $p^{\delta}[Z_0,\delta^1z_1, \delta^1z_2,...,\delta^1z_m]=0$ that is $p^{\delta}[Z_0,z_1,...,z_m]$ itself is zero polynomial. Due to the equality \[C\langle W_e;\partial\rangle^G \langle P^{\partial}\langle\mathbf{x}\rangle; \delta \rangle=C\langle W_e;\partial\rangle^G \langle P^{\partial}\langle\mathbf{x}\rangle^{-1}; \delta \rangle \ \mbox{and}\ P_1^{\partial}\langle\mathbf{x}\rangle^{-1}\in GL^{\delta}\langle m, C\langle\mathbf{x};\partial\rangle\rangle\] the $\delta$-algebraic transcendence degree of the system of components of $P^{\partial}\langle\mathbf{x}\rangle$ over
$C\langle W_e;\partial\rangle^G$  equals exactly to $n^2+m$. Now taking into account the equalities \[\dim W_e=\partial-\mbox{tr.deg.}C\langle W_e;\partial\rangle/C=\delta-\mbox{tr.deg.}C\langle W_e;\partial\rangle/C\] one can conclude that the equality $\delta-\mbox{tr.deg.}C\langle W_e;\partial\rangle^G/C=\dim (W_e)-(n^2+m)$ is true. \end{proof}

\section{Classification and invariants of the second order non-parabolic LPDE}

In this section we consider applications of the previous section results to the corresponding problems for the second order LPDE in two variables, which corresponds to $n=1, m=2, l=6$ case of that section. We get the needed applications via the construction the corresponding map $P^{\partial}$ satisfying conditions of the Assumption. In general neither $W_{0e}$ nor $P^{\partial}$ is unique.Therefore our construction of $P^{\partial}$ should be considered as one of the possible constructions. We refrain from reformulation  all results of Section 2 for each particular case as far as though the reformulation is not difficult but space consuming process. Further we use the following functions

 $\begin{array}{l}
  a^{\partial}\langle  V\rangle=  AC_x - A_xC + BC_y - B_yC - bB + 2aC ,\\
 b^{\partial}\langle  V\rangle= A_xB - AB_x + A_yC - AC_y - aB+ 2bA ,\\
  c^{\partial}\langle  V\rangle= ((\frac{a^{\partial}\langle  V\rangle}{D\langle V\rangle})_y - (\frac{b^{\partial}\langle  V\rangle}{D\langle  V\rangle})_x)^2D\langle V\rangle,\\
   c_1^{\partial}\langle  V\rangle=-\frac{(c^{\partial}\langle  V\rangle)_x}{2c^{\partial}\langle  V\rangle},\\
  c_2^{\partial}\langle  V\rangle=-\frac{(c^{\partial}\langle  V\rangle)_y}{2c^{\partial}\langle  V\rangle},\\
 \gamma^{\partial}\langle  V\rangle=A((c_1^{\partial}\langle  V\rangle)_x + c_1^{\partial}\langle  V\rangle^2)+B((c_1^{\partial}\langle  V\rangle)_y + c_1^{\partial}\langle  V\rangle c_2^{\partial}\langle  V\rangle) + \\ C((c_2^{\partial}\langle  V\rangle)_y +
  c_2^{\partial}\langle  V\rangle^2) + ac_1^{\partial}\langle  V\rangle + bc_2^{\partial}\langle  V\rangle+c.\\
 \gamma_0^{\partial}\langle  V\rangle=A((\frac{a^{\partial}\langle  V\rangle}{D\langle V\rangle})_x + \frac{a^{\partial}\langle  V\rangle^2}{D\langle V\rangle^2})+B((\frac{a^{\partial}\langle V\rangle}{D\langle V\rangle})_y + \frac{a^{\partial}\langle  V\rangle}{D\langle V\rangle} \frac{b^{\partial}\langle  V\rangle}{D\langle  V\rangle})+ \\ C((\frac{b^{\partial}\langle  V\rangle}{D\langle  V\rangle})_y + \frac{b^{\partial}\langle  V\rangle^2}{D\langle  V\rangle^2}) +
 a\frac{a^{\partial}\langle  V\rangle}{D\langle V\rangle} + b\frac{b^{\partial}\langle  V\rangle}{D\langle  V\rangle} + c,\ \mbox{where}\ D\langle V\rangle=B^2-4AB, \\
\end{array}$

to investigate classification and invariants problems for the second order LPDE. These functions are introduced in \cite{B1}.

The following theorem from \cite{B1} reduces system of equalities (1.3) to a more simple equivalence under the condition $ D\langle V\rangle \neq0 $.

\begin{theorem} If $ D\langle V\rangle\neq 0$ and $c^{\partial}\langle  V\rangle \neq0 $ ($ c^{\partial}\langle  V\rangle =0$) then system of equalities (1.3) is equivalent to the following system of equalities.
\begin{eqnarray}
\left\{ \begin{array}{l} \left( \begin{array}{cc}
A_{1} & B_{1}/2  \\
B_{1}/2 & C_{1}
\end{array} \right) =h
g^t \left(\begin{array}{cc}
A & B/2  \\
B/2 & C
\end{array} \right) g, \\
\left( \begin{array}{l}\frac{a^{\delta}\langle  V_1\rangle}{D\langle V_1\rangle}\\ \frac{b^{\delta}\langle  V_1\rangle}{D\langle V_1\rangle}\\ \end{array}\right)=
g^{-1}\left( \begin{array}{l}\frac{a^{\partial}\langle  V\rangle}{D\langle V\rangle}- h^{-1}\partial^1h\\ \frac{b^{\partial}\langle  V\rangle}{D\langle V\rangle} - h^{-1}\partial^2h\\ \end{array}\right),\\
\gamma^{\delta}\langle V_1\rangle=h\gamma^{\partial}\langle V\rangle\ \
(\mbox{respectively,}\ \gamma_0^{\delta}\langle V_1\rangle=h\gamma_0^{\partial}\langle V\rangle.)\\
\end{array} \right.
\end{eqnarray}
, where
 $g^t$ stands for the transpose of $g$.\end{theorem}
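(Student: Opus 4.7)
\emph{Plan of proof.} The strategy is to recast each of the six equations in (1.3) as one of the three equations in (3.1), exhibiting (3.1) as a clean repackaging of (1.3); because the two systems contain the same number of independent scalar equations, it suffices to derive (3.1) from (1.3), the converse being obtained by reversing each step.

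The first three equations of (1.3) describe precisely the transformation of the symmetric bilinear form with matrix $M=\left(\begin{smallmatrix}A & B/2\\ B/2 & C\end{smallmatrix}\right)$; a direct inspection shows they are equivalent to the single matrix equality $M_{1}=hg^{t}Mg$, which is the first line of (3.1). Taking determinants yields $D\langle V_{1}\rangle=h^{2}(\det g)^{2}\,D\langle V\rangle$, so the hypothesis $D\neq 0$ is preserved and the ratios $a^{\partial}/D$, $b^{\partial}/D$ make sense after transformation. The core computation is then to substitute the expressions from (1.3) for $A_{1},B_{1},C_{1},a_{1},b_{1}$ into the definition of $a^{\delta}\langle V_{1}\rangle$ and $b^{\delta}\langle V_{1}\rangle$, using $\delta=g^{-1}\partial$ together with the commutation condition $\partial^{i}g^{k}_{j}=\partial^{k}g^{i}_{j}$ built into $GL^{\partial}(2,F)$. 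After expanding and collecting terms, the contributions of the derivatives of the entries of $g$ cancel against the "transport" contributions, and what remains packages into
\[
\begin{pmatrix}a^{\delta}\langle V_{1}\rangle/D\langle V_{1}\rangle\\ b^{\delta}\langle V_{1}\rangle/D\langle V_{1}\rangle\end{pmatrix}
=g^{-1}\begin{pmatrix}a^{\partial}\langle V\rangle/D\langle V\rangle-h^{-1}\partial^{1}h\\ b^{\partial}\langle V\rangle/D\langle V\rangle-h^{-1}\partial^{2}h\end{pmatrix},
\]
which is the second line of (3.1). This step absorbs all the $h$-dependence into an exact gauge term $h^{-1}\partial h$.

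Once the vector transformation rule is established, the curl $\omega^{\partial}:=(a^{\partial}/D)_{y}-(b^{\partial}/D)_{x}$ transforms by a pure $g$- and $h$-factor, because the gauge term is exact and therefore killed by the antisymmetric combination; multiplying by $D$ yields the stated transformation $c^{\delta}\langle V_{1}\rangle=h\,c^{\partial}\langle V\rangle$. In the generic case $c^{\partial}\neq 0$, the logarithmic derivatives $c_{1}^{\partial},c_{2}^{\partial}$ satisfy exactly the same gauge-vector rule as $a^{\partial}/D,b^{\partial}/D$ (with $h^{-1}\partial h$ shift), and substituting them into the sixth line of (1.3) (the transformation of $c$) after writing $\partial h$ via $c_{1}^{\partial},c_{2}^{\partial}$ collapses the right-hand side to $h\gamma^{\partial}\langle V\rangle$, proving $\gamma^{\delta}\langle V_{1}\rangle=h\gamma^{\partial}\langle V\rangle$. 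In the degenerate case $c^{\partial}=0$ one cannot use $c^{\partial}$ to normalize the gauge, and $\gamma_{0}^{\partial}$ is defined by the same formula but with $c_{1}^{\partial},c_{2}^{\partial}$ replaced by $a^{\partial}/D,b^{\partial}/D$ themselves; the identical computation then gives $\gamma_{0}^{\delta}\langle V_{1}\rangle=h\gamma_{0}^{\partial}\langle V\rangle$.

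The main obstacle is purely computational: carrying out the substitution that verifies the vector transformation of $(a^{\partial}/D,b^{\partial}/D)$. The expressions for $a_{1},b_{1}$ in (1.3) mix second derivatives of $g$-entries, first derivatives of $h$, and products with $A,B,C,a,b$, and showing that everything reorganizes into $g^{-1}(\,\cdot\,-h^{-1}\partial h)$ requires systematic use of the commutativity condition $\partial^{i}g^{k}_{j}=\partial^{k}g^{i}_{j}$, the Leibniz rule, and Cramer's formula for $g^{-1}$; once this identity is in hand, the derivations of the transformation laws for $c^{\partial}$ and $\gamma^{\partial}$ (and their $\gamma_{0}^{\partial}$ analogue) are essentially automatic.
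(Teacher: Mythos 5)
The paper itself offers no proof of Theorem 3.1 (it is quoted from \cite{B1}), so your proposal can only be checked for internal soundness and against the consequences the paper does derive, e.g.\ in the proof of Theorem 3.6. Your overall plan — the first three lines of (1.3) rewritten as the congruence $M_1=hg^tMg$, then a vector transformation law for $(a^{\partial}/D,\,b^{\partial}/D)$, then the $\gamma$, $\gamma_0$ laws — is the natural route. But the heart of the theorem is precisely the identity
$\bigl(a^{\delta}\langle V_1\rangle/D\langle V_1\rangle,\ b^{\delta}\langle V_1\rangle/D\langle V_1\rangle\bigr)^t
= g^{-1}\bigl(a^{\partial}\langle V\rangle/D\langle V\rangle-h^{-1}\partial^1h,\ b^{\partial}\langle V\rangle/D\langle V\rangle-h^{-1}\partial^2h\bigr)^t$,
and this is exactly the step you defer as ``purely computational''; nothing in your text establishes it, so as a proof the write-up is incomplete at its central point. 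Moreover your weight for $c^{\partial}$ is wrong: from $D_0^{\delta}\langle V_1\rangle=\Delta^{-1}D_0^{\partial}\langle V\rangle$ and $D\langle V_1\rangle=h^2\Delta^2D\langle V\rangle$ one gets $c^{\delta}\langle V_1\rangle=h^2c^{\partial}\langle V\rangle$ (the paper states exactly this in the proof of Theorem 3.6), not $h\,c^{\partial}\langle V\rangle$. This is not cosmetic: your next claim, that $(c_1^{\partial},c_2^{\partial})=-\tfrac12\partial\log c^{\partial}$ obeys the same gauge rule as $(a^{\partial}/D,b^{\partial}/D)$, i.e.\ is shifted by exactly $-h^{-1}\partial h$ and conjugated by $g^{-1}$, holds precisely because the exponent of $h$ is $2$ and that of $\Delta$ is $0$; with the weight you assert, your own derivation of $\gamma^{\delta}\langle V_1\rangle=h\gamma^{\partial}\langle V\rangle$ would fail.

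Two further gaps. The converse direction is not obtained by ``same number of equations'' plus ``reversing each step'': what makes (3.1) imply (1.3) is that, once $A_1,B_1,C_1$ are fixed by the first line, the quantities $a^{\delta}\langle V_1\rangle,\,b^{\delta}\langle V_1\rangle$ are affine in $(a_1,b_1)$ with linear part of determinant $4A_1C_1-B_1^2=-D\langle V_1\rangle\neq0$, and $\gamma^{\delta}\langle V_1\rangle$ (resp.\ $\gamma_0^{\delta}\langle V_1\rangle$) is affine in $c_1$ with coefficient $1$; so (3.1) determines $V_1$ uniquely and must coincide with the $V_1$ of (1.3). This is where the hypothesis $D\langle V\rangle\neq0$ actually enters, and it should be said. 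Finally, the two cases are not handled by an ``identical computation'': the $B$-term of $\gamma_0^{\partial}$ involves only the one mixed derivative $(a^{\partial}/D)_y$, and the gauge computation goes through only for a curl-free pair, i.e.\ exactly when $c^{\partial}\langle V\rangle=0$, meaning $\partial^2(a^{\partial}/D)=\partial^1(b^{\partial}/D)$; for $\gamma^{\partial}$ the pair $(c_1^{\partial},c_2^{\partial})$ is automatically a gradient. Making explicit where each case hypothesis is used is necessary for the equivalence as stated.
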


 One of the interesting questions in theory of LPDE is the equivalence problem of LPDE to LPDE with constant coefficients. For a solution of this question for any order ordinary LDE one can see \cite{B4, B5, B6}. Here we would like to show that the above result can be used to treat this problem even without construction of the corresponding map $P^{\partial}\langle  \mathbf{x}\rangle$ (see Theorem 2.3).
 Note that for non-parabolic equation (1.1) the condition $V\in C^6$ is equivalent to \[(A,B,C,\frac{a^{\partial}\langle  V\rangle}{D\langle V\rangle},\frac{b^{\partial}\langle  V\rangle}{D\langle V\rangle},\gamma_0^{\partial}\langle V\rangle)\in C^6,\]  which can be seen directly from
the definition of the functions $a^{\partial}\langle  V\rangle,b^{\partial}\langle  V\rangle,\gamma_0^{\partial}\langle V\rangle$.  It implies that for (1.1) to be $G$-equivalent to LPDE with constant coefficients at least $c^{\partial}\langle  V\rangle$ should be zero that is the equality \[\partial^2\frac{a^{\partial}\langle  V\rangle}{D\langle V\rangle}=\partial^1\frac{b^{\partial}\langle  V\rangle}{D\langle V\rangle}\] is necessary.

\begin{corollary} Non-parabolic equation (1.1), for which $\partial^2\frac{a^{\partial}\langle  V\rangle}{D\langle V\rangle}=\partial^1\frac{b^{\partial}\langle  V\rangle}{D\langle V\rangle}$ and $\gamma_0^{\partial}\langle V\rangle\neq 0$, is $G$-equivalent to a LPDE with constant coefficients if and only if the system
 \[ \begin{array}{l}(\partial^igg^{-1})^tM^{\partial}\langle V\rangle + M^{\partial}\langle V\rangle\partial^igg^{-1} + \partial^iM^{\partial}\langle V\rangle=0, i=1,2 \\
\partial^igg^{-1}m^{\partial}\langle V\rangle = \partial^im^{\partial}\langle V\rangle, i=1,2 \end{array}\] has a solution in $GL^{\partial}(2,F)$,
where
$M^{\partial}\langle V\rangle= \left( \begin{array}{cc}
A/\gamma_0^{\partial}\langle V\rangle & B/(2\gamma_0^{\partial}\langle V\rangle)  \\
B/(2\gamma_0^{\partial}\langle V\rangle) & C/\gamma_0^{\partial}\langle V\rangle
\end{array} \right),\\
m^{\partial}\langle V\rangle= \left( \begin{array}{c} \frac{a^{\partial}\langle  V\rangle}{D\langle V\rangle}+ \gamma_0^{\partial}\langle V\rangle^{-1}\partial^1\gamma_0^{\partial}\langle V\rangle \\
\frac{b^{\partial}\langle  V\rangle}{D\langle V\rangle}+ \gamma_0^{\partial}\langle V\rangle^{-1}\partial^2\gamma_0^{\partial}\langle V\rangle\end{array} \right)$.
 \end{corollary}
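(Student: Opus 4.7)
My plan is to invoke Theorem~3.1 to replace the original system~(1.3) by the simpler equivalent system~(3.1). Under the standing hypotheses $D\langle V\rangle\neq 0$ and $\partial^2(a^\partial/D)=\partial^1(b^\partial/D)$ (i.e.\ $c^\partial\langle V\rangle=0$), that theorem applies in the ``$c^\partial=0$'' branch, so $(h,g)$ takes $V$ to $V_1$ exactly when the three equations of~(3.1) hold. Combined with the observation preceding the corollary, a non-parabolic $V_1$ lies in $C^6$ iff the six scalars $A_1,\,B_1,\,C_1,\,(a^\delta/D)\langle V_1\rangle,\,(b^\delta/D)\langle V_1\rangle,\,\gamma_0^\delta\langle V_1\rangle$ are all constants. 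The third line of~(3.1) reads $\gamma_0^\delta\langle V_1\rangle=h\,\gamma_0^\partial\langle V\rangle$, so $\gamma_0^\partial\langle V\rangle\neq 0$ forces $h=\mu/\gamma_0^\partial\langle V\rangle$ for some $\mu\in C^*$; different values of $\mu$ merely rescale $V_1$ by a constant, so without loss of generality $\mu=1$ and I fix $h:=\gamma_0^\partial\langle V\rangle^{-1}$.

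With this $h$, the first line of~(3.1) simplifies to $S_1=g^tM^\partial\langle V\rangle\,g$, where $S_1:=\bigl(\begin{smallmatrix}A_1 & B_1/2\\ B_1/2 & C_1\end{smallmatrix}\bigr)$; requiring $S_1\in\mathrm{Mat}_2(C)$ amounts to $\partial^i(g^tM^\partial g)=0$ for $i=1,2$. Setting $X_i:=(\partial^i g)g^{-1}$ and using $\partial^i g^t=g^tX_i^t$, direct expansion yields
\[
\partial^i(g^tM^\partial g)=g^t\bigl(X_i^tM^\partial+M^\partial X_i+\partial^iM^\partial\bigr)g,
\]
and invertibility of $g$ gives the first identity of the stated system. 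The same $h$ produces $-h^{-1}\partial^ih=(\partial^i\gamma_0^\partial)/\gamma_0^\partial$, so the second line of~(3.1) collapses to $\binom{(a^\delta/D)\langle V_1\rangle}{(b^\delta/D)\langle V_1\rangle}=g^{-1}m^\partial\langle V\rangle$; demanding the left-hand side to lie in $C^2$ is $\partial^i(g^{-1}m^\partial)=0$, and applying $\partial^i g^{-1}=-g^{-1}X_i$ followed by left-multiplication by $g$ gives $X_im^\partial=\partial^im^\partial$, the second identity.

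For the converse, I take $g\in GL^\partial(2,F)$ satisfying the two identities, set $h:=\gamma_0^\partial\langle V\rangle^{-1}$, and define $V_1:=\tau^\partial\langle h,g;V\rangle$ via~(1.3); reading the computations of the previous paragraph in reverse shows that $S_1$ is constant, $g^{-1}m^\partial\langle V\rangle$ is constant, and $\gamma_0^\delta\langle V_1\rangle=1$, whereas $D\langle V_1\rangle=h^2(\det g)^2D\langle V\rangle\neq 0$ keeps $V_1$ non-parabolic, so the cited characterization of $C^6$ forces $V_1\in C^6$. The main obstacle is the matrix-calculus bookkeeping of the middle paragraph -- identifying the quadratic gauge action $M^\partial\mapsto g^tM^\partial g$ and the affine one $m^\partial\mapsto g^{-1}m^\partial$ with the two identities of the system -- but this is routine once $h$ has been pinned down; the only conceptual step is the normalization of $h$ that is forced by the assumption $\gamma_0^\partial\langle V\rangle\neq 0$.
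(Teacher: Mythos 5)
Your proposal is correct and follows essentially the same route as the paper: reduce to the characterization of constancy via $(A_1,B_1,C_1,a^{\delta}/D,b^{\delta}/D,\gamma_0^{\delta})\in C^6$, apply the $c^{\partial}=0$ branch of Theorem 3.1, use constancy of $h\gamma_0^{\partial}\langle V\rangle$ to pin $h$ (the paper keeps $h^{-1}\partial h=-\gamma_0^{-1}\partial\gamma_0$, you normalize $h=\gamma_0^{-1}$, which is equivalent up to a harmless constant factor), and then rewrite $\partial^i(g^tM^{\partial}g)=0$ and $\partial^i(g^{-1}m^{\partial})=0$ as the stated identities in $\partial^igg^{-1}$. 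No gaps.
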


\begin{proof} The relation $V_1=\tau^{\partial}\langle h,g; V\rangle \in C^6$ is equivalent to \[ (A_1,B_1,C_1,\frac{a^{\delta}\langle  V_1\rangle}{D\langle V_1\rangle},\frac{b^{\delta}\langle  V_1\rangle}{D\langle V_1\rangle},\gamma_0^{\delta}\langle V_1\rangle)\in C^6.\] Therefore due to Theorem 3.1 it is equivalent to
$h^{-1}\partial h=-\gamma_0^{\partial}\langle V\rangle^{-1}\partial\gamma_0^{\partial}\langle V\rangle$ and
\[ \left\{ \begin{array}{l}\partial^i(g^tM^{\partial}\langle V\rangle g)=0, i=1,2 \\
\partial^i(g^{-1}m^{\partial}\langle V\rangle) = 0, i=1,2 \end{array} \right.\] which can easily be presented in the following form \[ \left\{ \begin{array}{l}(\partial^igg^{-1})^tM^{\partial}\langle V\rangle + M^{\partial}\langle V\rangle\partial^igg^{-1} + \partial^iM^{\partial}\langle V\rangle=0, i=1,2 \\
\partial^igg^{-1}m^{\partial}\langle V\rangle = \partial^im^{\partial}\langle V\rangle , i=1,2.  \end{array}\right..\] \end{proof}

It should be noted that matrices $\partial^1gg^{-1}, \partial^2gg^{-1}$ are defined uniquely from the linear system  \begin{eqnarray} (\partial^igg^{-1})^tM^{\partial}\langle V\rangle + M^{\partial}\langle V\rangle\partial^igg^{-1} + \partial^iM^{\partial}\langle V\rangle=0, i=1,2 \end{eqnarray} due to $D\langle V\rangle\neq 0$. In terms of the ordinary notation $ \left( \begin{array}{cc}
\xi_x & \eta_x \\
\xi_y & \eta_y
\end{array} \right)$ for $g$ it means that expressions \begin{eqnarray}\begin{array}{c} \frac{1}{\Delta}|\begin{array}{cc}
\xi_{xx} & \eta_{xx} \\
\xi_y & \eta_y
\end{array} |,\ \frac{1}{\Delta}|\begin{array}{cc}\xi_{x} & \eta_{x} \\
\xi_{xx} & \eta_{xx}\end{array} |,\  \frac{1}{\Delta}|\begin{array}{cc}
\xi_{xy} & \eta_{xy} \\ \xi_y & \eta_y\end{array} |, \\ \frac{1}{\Delta}|\begin{array}{cc}
\xi_{x} & \eta_{x} \\
\xi_{xy} & \eta_{xy}\end{array} |,\ \frac{1}{\Delta}|\begin{array}{cc}
\xi_{yy} & \eta_{yy} \\
\xi_y & \eta_y
\end{array} |,\ \frac{1}{\Delta}|\begin{array}{cc}
\xi_{x} & \eta_{x} \\
\xi_{yy} & \eta_{yy}
\end{array} |,\end{array}\end{eqnarray} which represent all entries of the matrices $\partial^1gg^{-1}, \partial^2gg^{-1}$, are defined uniquely by the entries of $M^{\partial}\langle V\rangle$.

Therefore system (3.2) can be written in the form \begin{eqnarray}\{\begin{array}{c}\partial^1gg^{-1}=N_1^{\partial}\langle V\rangle\\ \partial^2gg^{-1}=N_2^{\partial}\langle V\rangle\end{array}.\end{eqnarray}
In particular it implies that equalities $\partial^igg^{-1}m^{\partial}\langle V\rangle = \partial^im^{\partial}\langle V\rangle, i=1,2$ are nothing than some relations between entries of $M^{\partial}\langle V\rangle$ and
$m^{\partial}\langle V\rangle$ needed for the reducibility of (1.1) to LPDE with constant coefficients. So the problem is reduced to the solvability of (3.4) in  $GL^{\partial}(2,F)$.

\begin{remark} One can ask about the admissible values of six expressions given in (3.3) . That is if one attaches some values to these six expressions how to know whether the obtained system of first order nonlinear differential equations has solution in $GL^{\partial}(2,F)$ or not? In other words one can ask about the integrability conditions of the obtained system of six equations. It can be answered in the following way. Construct matrices $N_1^{\partial}\langle V\rangle, N_2^{\partial}\langle V\rangle$ using the attached values and equalities (3.4). The integrability condition for the system is given by the next Proposition.\end{remark}

In the next Proposition it is assumed that the matrix equation $\partial^1gg^{-1}=N$ has solution in $GL(2,F)$ whenever $N$ is a second order matrix over $F$ and if all entries of $N$ are constants with respect to $\partial^1$, that is $\partial^1N=0,$ then the equation  $\partial^2gg^{-1}=N$ also has solution with constant entries with respect to $\partial^1$.

\begin{proposition} System (3.4) has solution in $GL^{\partial}(2,F)$ if and only if the following equalities \[\partial^1N_2^{\partial}\langle V\rangle + N_2^{\partial}\langle V\rangle N_1^{\partial}\langle V\rangle =\partial^2N_1^{\partial}\langle V\rangle + N_1^{\partial}\langle V\rangle N_2^{\partial}\langle V\rangle,\ \ N_2^{\partial}\langle V\rangle^{(1)}=N_1^{\partial}\langle V\rangle^{(2)}\] hold true, where $N_2^{\partial}\langle V\rangle^{(1)}$ stands for the first row of the matrix $N_2^{\partial}\langle V\rangle$.\end{proposition}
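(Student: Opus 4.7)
The plan is to split the argument into necessity and sufficiency, with the sufficiency part proceeding in two stages.

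For \emph{necessity}, I would assume that $g\in GL^{\partial}(2,F)$ solves (3.4), so that $\partial^1 g=N_1^{\partial}\langle V\rangle g$ and $\partial^2 g=N_2^{\partial}\langle V\rangle g$. Applying $\partial^2$ to the first relation and $\partial^1$ to the second and equating the results (which is legitimate because $\partial^1\partial^2 g=\partial^2\partial^1 g$), then multiplying on the right by $g^{-1}$, isolates exactly the matrix identity $\partial^2 N_1^{\partial}\langle V\rangle+N_1^{\partial}\langle V\rangle N_2^{\partial}\langle V\rangle=\partial^1 N_2^{\partial}\langle V\rangle+N_2^{\partial}\langle V\rangle N_1^{\partial}\langle V\rangle$. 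The row identity is simply the defining condition of $GL^{\partial}(2,F)$ rewritten: $\partial^1 g^2_k=\partial^2 g^1_k$ for $k=1,2$ says that row $2$ of $\partial^1 g$ agrees with row $1$ of $\partial^2 g$, and after substituting $\partial^i g=N_i^{\partial}\langle V\rangle g$ and cancelling $g$ on the right this reads $N_1^{\partial}\langle V\rangle^{(2)}=N_2^{\partial}\langle V\rangle^{(1)}$.

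For \emph{sufficiency}, I would first invoke the first clause of the standing assumption preceding the proposition to fix some $g_0\in GL(2,F)$ with $\partial^1 g_0\cdot g_0^{-1}=N_1^{\partial}\langle V\rangle$, and then look for the full solution in the form $g=g_0 h$. A short calculation shows that $\partial^1 g\cdot g^{-1}=N_1^{\partial}\langle V\rangle$ reduces to $\partial^1 h=0$, while $\partial^2 g\cdot g^{-1}=N_2^{\partial}\langle V\rangle$ becomes $\partial^2 h\cdot h^{-1}=M$ with $M:=g_0^{-1}N_2^{\partial}\langle V\rangle g_0-g_0^{-1}\partial^2 g_0$. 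By the second clause of the assumption, such an $h$ exists in $GL(2,F)$ as soon as $\partial^1 M=0$.

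Verifying $\partial^1 M=0$ is the one real calculation in the proof, and it is where the commutator hypothesis is used. Using $\partial^1 g_0^{-1}=-g_0^{-1}N_1^{\partial}\langle V\rangle$ together with $\partial^1\partial^2 g_0=\partial^2\partial^1 g_0=\partial^2(N_1^{\partial}\langle V\rangle g_0)$, I expect to obtain $\partial^1 M=g_0^{-1}\bigl(\partial^1 N_2^{\partial}\langle V\rangle+N_2^{\partial}\langle V\rangle N_1^{\partial}\langle V\rangle-N_1^{\partial}\langle V\rangle N_2^{\partial}\langle V\rangle-\partial^2 N_1^{\partial}\langle V\rangle\bigr) g_0$, which vanishes by hypothesis. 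Having produced $g=g_0 h\in GL(2,F)$ solving (3.4), the last step is to check $g\in GL^{\partial}(2,F)$: substituting $\partial^i g=N_i^{\partial}\langle V\rangle g$ into the defining condition $\partial^1 g^2_k=\partial^2 g^1_k$ and using the row identity $N_2^{\partial}\langle V\rangle^{(1)}=N_1^{\partial}\langle V\rangle^{(2)}$ gives it at once. The main obstacle is purely the bookkeeping in the $\partial^1 M=0$ computation; everything else is either elementary linear algebra or a direct appeal to the stated existence assumption.
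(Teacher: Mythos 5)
Your proposal is correct and follows essentially the same route as the paper: necessity by differentiating $\partial^i g=N_i^{\partial}\langle V\rangle g$ and using commutativity plus the defining row condition of $GL^{\partial}(2,F)$, and sufficiency by writing $g=g_1g_2$ with $\partial^1g_2=0$ and checking that $g_1^{-1}N_2^{\partial}\langle V\rangle g_1-g_1^{-1}\partial^2g_1$ is $\partial^1$-constant, which is exactly the paper's computation. The final membership check via $N_2^{\partial}\langle V\rangle^{(1)}=N_1^{\partial}\langle V\rangle^{(2)}$ also matches.
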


\begin{proof} If (3.4) is true for some $g\in GL^{\partial}(2,F)$ then \[\partial^2N_1^{\partial}\langle V\rangle=\partial^2(\partial^1gg^{-1})=\partial^2\partial^1gg^{-1}- \partial^1gg^{-1}\partial^2gg^{-1}=\partial^2\partial^1gg^{-1}- N_1^{\partial}\langle V\rangle N_2^{\partial}\langle V\rangle\] and similarly \[\partial^1N_2^{\partial}\langle V\rangle=\partial^2\partial^1gg^{-1}- N_2^{\partial}\langle V\rangle N_1^{\partial}\langle V\rangle\] which imply that \[\partial^1N_2^{\partial}\langle V\rangle + N_2^{\partial}\langle V\rangle N_1^{\partial}\langle V\rangle =\partial^2N_1^{\partial}\langle V\rangle + N_1^{\partial}\langle V\rangle N_2^{\partial}\langle V\rangle.\] Due to $\partial^1g^{(2)}=\partial^2g^{(1)}$ one gets easily the equality $N_2^{\partial}\langle V\rangle^{(1)}=N_1^{\partial}\langle V\rangle^{(2)}$.

Let equalities \[\partial^1N_2^{\partial}\langle V\rangle + N_2^{\partial}\langle V\rangle N_1^{\partial}\langle V\rangle =\partial^2N_1^{\partial}\langle V\rangle + N_1^{\partial}\langle V\rangle N_2^{\partial}\langle V\rangle,\ \ N_2^{\partial}\langle V\rangle^{(1)}=N_1^{\partial}\langle V\rangle^{(2)}\] be true and $g_1\in GL(2,F)$ stand for any particular solution of
$\partial^1gg^{-1}=N_1^{\partial}\langle V\rangle$. It is clear that $g=g_1g_2$ also satisfies it whenever $g_2\in GL(2,F)$ and $\partial^1g_2=0$. There exists such $g_2$ for which $g=g_1g_2$ satisfies the equation $\partial^2gg^{-1}=N_2^{\partial}\langle V\rangle$. Indeed present $\partial^2(g_1g_2)(g_1g_2)^{-1}=N_2^{\partial}\langle V\rangle$ in the form
$\partial^2g_2g_2^{-1}=g_1^{-1}N_2^{\partial}\langle V\rangle g_1-g_1^{-1}\partial^2g_1$. Let us show that its right side is a constant with respect to the $\partial^1$.
\[\partial^1(g_1^{-1}N_2^{\partial}\langle V\rangle g_1-g_1^{-1}\partial^2g_1)=\] \[-g_1^{-1}\partial^1g_1g_1^{-1}(N_2^{\partial}\langle V\rangle g_1-\partial^2g_1)+g_1^{-1}(\partial^1N_2^{\partial}\langle V\rangle g_1 + N_2^{\partial}\langle V\rangle\partial^1g_1-\partial^1\partial^2g_1)=\] \[g_1^{-1}(-N_1^{\partial}\langle V\rangle N_2^{\partial}\langle V\rangle +N_1^{\partial}\langle V\rangle\partial^2g_1g_1^{-1}+\partial^1N_2^{\partial}\langle V\rangle + N_2^{\partial}\langle V\rangle N_1^{\partial}\langle V\rangle -\partial^2\partial^1g_1g_1^{-1})g_1=\] \[g_1^{-1}(-N_1^{\partial}\langle V\rangle N_2^{\partial}\langle V\rangle +N_1^{\partial}\langle V\rangle\partial^2g_1g_1^{-1}+\partial^1N_2^{\partial}\langle V\rangle + N_2^{\partial}\langle V\rangle N_1^{\partial}\langle V\rangle -\] \[\partial^2N_1^{\partial}\langle V\rangle-N_1^{\partial}\langle V\rangle\partial^2g_1g_1^{-1})g_1=0.\]
So system (3.4) has solution $g=g_1g_2\in GL(2,F)$. The equality $N_2^{\partial}\langle V\rangle^{(1)}=N_1^{\partial}\langle V\rangle^{(2)}$ means that such solution belongs to $GL^{\partial}(2,F)$.\end{proof}

Corollary 3.2 and Proposition 3.3 show that for a considered second order LPDE its equivalence to LPDE with constant constants is reduced to the checking of equalities
\[\partial^1N_2^{\partial}\langle V\rangle + N_2^{\partial}\langle V\rangle N_1^{\partial}\langle V\rangle =\partial^2N_1^{\partial}\langle V\rangle + N_1^{\partial}\langle V\rangle N_2^{\partial}\langle V\rangle, N_2^{\partial}\langle V\rangle^{(1)}=N_1^{\partial}\langle V\rangle^{(2)},\] \[N_1^{\partial}\langle V\rangle m^{\partial}\langle V\rangle = \partial^1m^{\partial}\langle V\rangle\ \mbox{and}\ N_2^{\partial}\langle V\rangle m^{\partial}\langle V\rangle= \partial^2m^{\partial}\langle V\rangle.\]
We are refraining from writing the exact expressions for them because of their big sizes. Note also that $g_1,g_2\in GL^{\partial}(2,F)$ are solutions of the same (3.4) if and only if $g_1^{-1}g_2\in GL(2,C)$.

An analog of the above Corollary can be obtained, without any construction of the corresponding map $P^{\partial}\langle  \mathbf{x}\rangle$, in $\gamma_0^{\partial}\langle V\rangle= 0$ case also.

\begin{corollary} Non-parabolic equation (1.1), for which $\partial^2\frac{a^{\partial}\langle  V\rangle}{D\langle V\rangle}=\partial^1\frac{b^{\partial}\langle  V\rangle}{D\langle V\rangle}$ and $\gamma_0^{\partial}\langle V\rangle= 0$, is $G$-equivalent to a LPDE with constant coefficients if and only if the system
\begin{eqnarray} \begin{array}{l}(\partial^igg^{-1})^tM^{\partial}\langle V\rangle + M^{\partial}\langle V\rangle\partial^igg^{-1} + \partial^iM^{\partial}\langle V\rangle-\\ (\Delta^{-1}\partial^i\Delta+\frac{1}{2}D\langle V\rangle^{-1}\partial^iD\langle V\rangle)M^{\partial}\langle V\rangle=0, i=1,2,\\
\partial^igg^{-1}m^{\partial}\langle V\rangle = \partial^im^{\partial}\langle V\rangle, i=1,2 \end{array} \end{eqnarray} has a solution in $GL^{\partial}(2,F)$,
where
$M^{\partial}\langle V\rangle= \left( \begin{array}{cc}
A & B/2  \\
B/2 & C
\end{array} \right),\\
m^{\partial}\langle V\rangle=\left( \left( \begin{array}{c} \frac{a^{\partial}\langle  V\rangle}{D\langle V\rangle} \\
\frac{b^{\partial}\langle  V\rangle}{D\langle V\rangle}\end{array} \right)+ \Delta^{-1}\partial\Delta+\frac{1}{2}D\langle V\rangle^{-1}\partial D\langle V\rangle\right)$.
\end{corollary}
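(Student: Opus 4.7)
The plan is to adapt the proof of Corollary 3.2 to the case $\gamma_0^{\partial}\langle V\rangle = 0$, where the normalization of the scalar factor $h$ must be chosen differently. By Theorem 3.1, the relation $V_1 = \tau^{\partial}\langle h, g; V\rangle$ is equivalent to system (3.1). For a non-parabolic equation, the condition $V_1 \in C^6$ is equivalent (as noted in the paragraph preceding Corollary 3.2) to the six quantities $A_1$, $B_1$, $C_1$, $a^{\delta}\langle V_1\rangle/D\langle V_1\rangle$, $b^{\delta}\langle V_1\rangle/D\langle V_1\rangle$, $\gamma_0^{\delta}\langle V_1\rangle$ all lying in $C$. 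Under the hypothesis $\gamma_0^{\partial}\langle V\rangle = 0$, the transformation rule $\gamma_0^{\delta}\langle V_1\rangle = h \gamma_0^{\partial}\langle V\rangle$ forces $\gamma_0^{\delta}\langle V_1\rangle = 0$ automatically, so this component cannot be used to pin down $h$ as in Corollary 3.2.

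Instead, I would extract $h$ from the necessary condition $D\langle V_1\rangle \in C$, which follows from $A_1, B_1, C_1 \in C$. Since $D\langle V_1\rangle = h^2 \Delta^2 D\langle V\rangle$ with $\Delta = \det g$, the equations $\partial^i D\langle V_1\rangle = 0$ give
\[ h^{-1}\partial^i h = -\Delta^{-1}\partial^i\Delta - \tfrac{1}{2} D\langle V\rangle^{-1}\partial^i D\langle V\rangle, \quad i = 1, 2, \]
determining $h$ up to a multiplicative constant. Substituting this into the identity $\partial^i M_1 = 0$ (where $M_1 = h g^t M^{\partial}\langle V\rangle g$) and cleaning up by multiplying on the left by $(g^t)^{-1}$ and on the right by $g^{-1}$ produces exactly the first equation of (3.5). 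Applying the same substitution to the condition that the column vector $g^{-1}\bigl((a^{\partial}/D, b^{\partial}/D)^t - h^{-1}\partial h\bigr)$ from (3.1) be $\partial$-constant gives $\partial^i g g^{-1} m^{\partial}\langle V\rangle = \partial^i m^{\partial}\langle V\rangle$, since $-h^{-1}\partial h$ becomes precisely $\Delta^{-1}\partial\Delta + \tfrac{1}{2} D^{-1}\partial D$, matching the $m^{\partial}\langle V\rangle$ in the statement.

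For the converse, I would proceed in parallel with Corollary 3.2: given a solution $g \in GL^{\partial}(2, F)$ of (3.5), recover $h \in F^*$ satisfying the displayed logarithmic-derivative relations (their integrability is immediate from $\partial^i\partial^j = \partial^j\partial^i$ applied to $\log\Delta$ and $\log D$), and verify from (3.1) that $V_1 := \tau^{\partial}\langle h, g; V\rangle$ has all six components in $C$. The main obstacle I foresee is ensuring that $\gamma_0^{\delta}\langle V_1\rangle = 0$, together with constancy of $A_1, B_1, C_1, a^{\delta}\langle V_1\rangle/D\langle V_1\rangle, b^{\delta}\langle V_1\rangle/D\langle V_1\rangle$, still forces the individual coefficients $a_1, b_1, c_1 \in C$. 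This is the same reduction invoked before Corollary 3.2, but merits a brief explicit check in this limiting case, since here it is the vanishing relation $\gamma_0^{\delta}\langle V_1\rangle = 0$ (rather than the independent constancy of $\gamma_0^{\delta}\langle V_1\rangle$) that must be used to pin down $c_1$ in terms of the other already-constant data.
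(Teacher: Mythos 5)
Your proposal matches the paper's own proof: it likewise invokes the equivalence of $V_1\in C^6$ with constancy of $(A_1,B_1,C_1,a^{\delta}\langle V_1\rangle/D\langle V_1\rangle,b^{\delta}\langle V_1\rangle/D\langle V_1\rangle,\gamma_0^{\delta}\langle V_1\rangle)$, pins down $h^{-1}\partial h=-\Delta^{-1}\partial\Delta-\tfrac12 D\langle V\rangle^{-1}\partial D\langle V\rangle$ from $D\langle V_1\rangle=h^2\Delta^2D\langle V\rangle\in C$, and rewrites $\partial^i(hg^tM^{\partial}\langle V\rangle g)=0$, $\partial^i(g^{-1}m^{\partial}\langle V\rangle)=0$ as (3.5). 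The concern you flag at the end is not an obstacle: the reduction stated before Corollary 3.2 recovers $c_1$ from $\gamma_0^{\delta}\langle V_1\rangle$ and the other constant data regardless of whether $\gamma_0^{\delta}\langle V_1\rangle$ is zero, since non-parabolicity ($D\neq 0$) already yields $a_1,b_1\in C$ and then $c_1$ is a constant combination.
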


\begin{proof} Indeed the relation  $V_1=\tau^{\partial}\langle h,g; V\rangle\in C^6$ is equivalent to \[ (A_1,B_1,C_1,\frac{a^{\delta}\langle  V_1\rangle}{D\langle V_1\rangle},\frac{b^{\delta}\langle  V_1\rangle}{D\langle V_1\rangle},\gamma_0^{\delta}\langle V_1\rangle)\in C^6.\] Therefore due to Theorem 3.1 and $D\langle V_1\rangle=h^2\Delta^2D\langle V\rangle\in C$ it is equivalent to\\
$h^{-1}\partial h=-\Delta^{-1}\partial\Delta - \frac{1}{2}D\langle V\rangle\partial D\langle V\rangle$ and
\[ \left\{ \begin{array}{l}\partial^i(hg^tM^{\partial}\langle V\rangle g)=0, i=1,2 \\
\partial^i(g^{-1}m^{\partial}\langle V\rangle) = 0, i=1,2 \end{array} \right..\] which can be represented in form of (3.5).\end{proof}

Note that in this case the first two equations of (3.5) are still linear relative to the entries of $\partial^1gg^{-1}, \partial^2gg^{-1}$, now with a singular main matrix, but the next two equations of it are not linear with respect to them as far as \[\Delta^{-1}\partial^1\Delta=  \frac{1}{\Delta}|\begin{array}{cc}
\xi_{xx} & \eta_{xx} \\
\xi_y & \eta_y
\end{array} |+  \frac{1}{\Delta}|\begin{array}{cc}\xi_{x} & \eta_{x} \\
\xi_{xy} & \eta_{xy}\end{array} |,\] \[ \Delta^{-1}\partial^2\Delta= \frac{1}{\Delta}|\begin{array}{cc}
\xi_{xy} & \eta_{xy} \\ \xi_y & \eta_y\end{array} |+\frac{1}{\Delta}|\begin{array}{cc}
\xi_{x} & \eta_{x} \\
\xi_{yy} & \eta_{yy}
\end{array} |.\]

 Of course when the corresponding map $P^{\partial}$ is known one can use the general criterion Theorem 2.3 to get criterion of reducibility for second order non-parabolic LPDE instead of Corollaries 3.2-3.5. Now we are going to consider construction of $P^{\partial}$ for different $G$-invariant subsets of second order non-parabolic LPDE. For this purpose we need some particular cases of Theorem 3.1 in more convenient form.

 \begin{theorem} If $ D\langle V\rangle c^{\partial}\langle  V\rangle \neq 0$  then (1.3) is equivalent to the following system of equalities.
\begin{eqnarray}
\left\{ \begin{array}{l} \left( \begin{array}{cc}
A_{1} & B_{1}/2  \\
B_{1}/2 & C_{1}
\end{array} \right) =h
g^t \left(\begin{array}{cc}
A & B/2  \\
B/2 & C
\end{array} \right) g, \\
\left( \begin{array}{l} \alpha^{\delta}\langle V_{1}\rangle    \\
\beta^{\delta}\langle V_{1}\rangle \end{array} \right)=
g^{-1}\left( \begin{array}{l} \alpha^{\partial}\langle  V\rangle
\\ \beta^{\partial}\langle  V\rangle \end{array} \right),\\
\gamma^{\delta}\langle V_1\rangle=h\gamma^{\partial}\langle V\rangle
\end{array} \right.,
\end{eqnarray}
 where
$\left( \begin{array}{l} \alpha^{\partial}\langle  V\rangle
\\ \beta^{\partial}\langle  V\rangle \end{array} \right)=2\left(\begin{array}{l}\frac{a^{\partial}\langle  V\rangle}{D\langle V\rangle}\\ \frac{b^{\partial}\langle  V\rangle}{D\langle V\rangle}\end{array}\right)+
\frac{1}{D\langle  V\rangle}\partial D\langle V\rangle + 2D_0^{\partial}\langle  V\rangle^{-1}\partial D_0^{\partial}\langle  \mathbf{x}\rangle$, $D_0^{\partial}\langle  \mathbf{x}\rangle=\partial^2\frac{a^{\partial}\langle  \mathbf{x}\rangle}{D\langle \mathbf{x}\rangle}- \partial^1\frac{b^{\partial}\langle  \mathbf{x}\rangle}{D\langle \mathbf{x}\rangle}$. \end{theorem}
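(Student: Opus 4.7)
My plan is as follows. By Theorem 3.1, under the hypothesis $D\langle V\rangle c^{\partial}\langle V\rangle\neq 0$ the system (1.3) is equivalent to (3.1); hence it suffices to prove that (3.1) and (3.6) are equivalent. The first matrix equation and the $\gamma$-equation of (3.6) coincide verbatim with their counterparts in (3.1), so the task reduces to showing that, given the first equation, the middle equation of (3.1) is equivalent to the middle equation of (3.6). Taking determinants in the matrix equation yields the scalar identity $D\langle V_1\rangle = h^2\Delta^2 D\langle V\rangle$ with $\Delta=\det g$, which will be used throughout.

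The crux is to establish the transformation law $c^{\delta}\langle V_1\rangle = h^2\, c^{\partial}\langle V\rangle$. Write $\mu=(a^{\partial}\langle V\rangle/D\langle V\rangle,\, b^{\partial}\langle V\rangle/D\langle V\rangle)^t$ and let $\mu_1$ be its $\delta$-analogue, so $D_0^{\partial}\langle V\rangle=\partial^2\mu^1-\partial^1\mu^2$ and $D_0^{\delta}\langle V_1\rangle=\delta^2\mu_1^1-\delta^1\mu_1^2$. Starting from the middle equation of (3.1), which reads $g\mu_1=\mu-h^{-1}\partial h$, I apply $\partial^2$ to the first component and $\partial^1$ to the second and take the difference. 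The gauge piece $h^{-1}\partial h$ contributes zero (partial derivatives commute); the terms containing derivatives of the entries of $g$ cancel because $g\in GL^{\partial}(2,F)$ enforces $\partial^i g^j_k=\partial^j g^i_k$; the remaining cross-terms telescope via $\partial=g\delta$ to the single factor $\Delta$. This yields $D_0^{\partial}\langle V\rangle=\Delta\, D_0^{\delta}\langle V_1\rangle$, and combined with the determinant identity one obtains $c^{\delta}\langle V_1\rangle=(D_0^{\delta}\langle V_1\rangle)^2 D\langle V_1\rangle = h^2\, c^{\partial}\langle V\rangle$.

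Bridging the two middle equations is then a short computation. By construction $(\alpha^{\partial},\beta^{\partial})^t = 2\mu + \partial\log c^{\partial}$, and the $\delta$-analogue holds in the $V_1$-picture. Using $\partial=g\delta$, I rewrite $g(\alpha^{\delta}\langle V_1\rangle,\beta^{\delta}\langle V_1\rangle)^t = 2 g\mu_1 + \partial\log c^{\delta}\langle V_1\rangle$. Logarithmic differentiation of the law $c^{\delta}\langle V_1\rangle=h^2 c^{\partial}$ gives $\partial\log c^{\delta}\langle V_1\rangle = 2 h^{-1}\partial h + \partial\log c^{\partial}$; substituting and cancelling the $\partial\log c^{\partial}$ terms, the middle equation of (3.6) becomes $g\mu_1 + h^{-1}\partial h = \mu$, which is the middle equation of (3.1). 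Every step reverses, so the two are equivalent. For the backward direction one does not need the middle of (3.1) to prove the scaling law: the identity $\partial^2\alpha^{\partial}-\partial^1\beta^{\partial}=2 D_0^{\partial}$, valid because $\partial\log c^{\partial}$ has zero curl, applied to both sides of the middle equation of (3.6) and followed by the same $g$-curl computation as above, reproduces $D_0^{\delta}\langle V_1\rangle=\Delta^{-1}D_0^{\partial}$ and hence $c^{\delta}\langle V_1\rangle=h^2 c^{\partial}$. The main obstacle is this $g$-curl computation: one must carefully combine the commutativity $\partial^i g^j_k=\partial^j g^i_k$ with $\partial=g\delta$ to extract the Jacobian factor $\Delta$ cleanly; everything else is symbol-pushing.
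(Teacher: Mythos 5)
Your proposal is correct and follows essentially the same route as the paper: reduce to Theorem 3.1's system (3.1), obtain the relative invariance $D_0^{\delta}\langle V_1\rangle=\Delta^{-1}D_0^{\partial}\langle V\rangle$ (hence $c^{\delta}\langle V_1\rangle=h^{2}c^{\partial}\langle V\rangle$, using $D\langle V_1\rangle=h^{2}\Delta^{2}D\langle V\rangle$) by the cross-derivative argument applied to the gauge relation $g\mu_1=\mu-h^{-1}\partial h$, and then recast the middle equation. The only differences are cosmetic: you package $\frac{1}{D}\partial D+2D_0^{-1}\partial D_0$ as $\partial\log c^{\partial}$ where the paper instead substitutes $\Delta=D_0^{\partial}\langle V\rangle/D_0^{\delta}\langle V_1\rangle$ into the transformed relation, and you spell out the converse implication (via $\partial^{2}\alpha^{\partial}-\partial^{1}\beta^{\partial}=2D_0^{\partial}$) which the paper leaves implicit as reversibility of its steps.
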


\begin{proof} Due to the second equality of (3.1) one has
\[g\left( \begin{array}{l}\frac{a^{\delta}\langle  V_1\rangle}{D\langle V_1\rangle}\\ \frac{b^{\delta}\langle  V_1\rangle}{D\langle V_1\rangle}\\ \end{array}\right)-
\left( \begin{array}{l}\frac{a^{\partial}\langle  V\rangle}{D\langle V\rangle}\\ \frac{b^{\partial}\langle  V\rangle}{D\langle V\rangle}\\ \end{array}\right)= -\left( \begin{array}{l}\frac{h_x}{h}\\ \frac{h_y}{h}\\ \end{array}\right).\]

The equality $(\frac{h_x}{h})_y=(\frac{h_y}{h})_x$ implies that
\[\Delta(\delta_2\frac{a^{\delta}\langle  V_1\rangle}{D\langle V_1\rangle}- \delta_1\frac{b^{\delta}\langle  V_1\rangle}{D\langle V_1\rangle})=\partial^2\frac{a^{\partial}\langle  V\rangle}{D\langle V\rangle}- \partial^1\frac{b^{\partial}\langle  V\rangle}{D\langle V\rangle},\] that is the function
$D_0^{\partial}\langle  \mathbf{x}\rangle=\partial^2\frac{a^{\partial}\langle  \mathbf{x}\rangle}{D\langle \mathbf{x}\rangle}- \partial^1\frac{b^{\partial}\langle  \mathbf{x}\rangle}{D\langle \mathbf{x}\rangle}$ is a relative invariant: \[D_0^{g^{-1}\partial}\langle \tau^{\partial}\langle h,g; \mathbf{x}\rangle\rangle=
\Delta^{-1}D_0^{\partial}\langle  \mathbf{x}\rangle\ \ \mbox{ for any}\ \ (h,g)\in G.\]
Now, in particular, one can see that the function $c^{\partial}\langle  \mathbf{x}\rangle =(D_0^{\partial}\langle  \mathbf{x}\rangle)^2D\langle  \mathbf{x}\rangle$ is also a relative invariant with
property $c^{g^{-1}\partial}\langle \tau^{\partial}\langle h,g; \mathbf{x}\rangle\rangle=
h^{2}c^{\partial}\langle  \mathbf{x}\rangle$ as far as the discriminant $D\langle  \mathbf{x}\rangle$ has property $D\langle \tau^{\partial}\langle h,g; \mathbf{x}\rangle\rangle=
h^2\Delta^{2}D\langle  \mathbf{x}\rangle.$

Due to the last equality one has
\[\frac{1}{D\langle  V_1\rangle}\delta D\langle V_1\rangle=g^{-1}(\frac{1}{D\langle  V\rangle}\partial D\langle V\rangle+2\frac{1}{h}\partial h +2\frac{1}{\Delta}\partial \Delta).\]
Multiply the second equality of (3.1) by 2 and add it to the above equality to get \[2\left(\begin{array}{l}\frac{a^{\delta}\langle  V_1\rangle}{D\langle V_1\rangle}\\ \frac{b^{\delta}\langle  V_1\rangle}{D\langle V_1\rangle}\end{array}\right)+
\frac{1}{D\langle  V_1\rangle}\delta D\langle V_1\rangle =g^{-1}(2\left(\begin{array}{l}\frac{a^{\partial}\langle  V\rangle}{D\langle V\rangle}\\ \frac{b^{\partial}\langle  V\rangle}{D\langle V\rangle}\end{array}\right)+
\frac{1}{D\langle  V\rangle}\partial D\langle V\rangle +2\frac{1}{\Delta}\partial\Delta).\] Substitution $\Delta=
\frac{D_0^{\partial}\langle  V\rangle}{D_0^{\delta}\langle  V_1\rangle}$ into it results in the second equality of (3.6).\end{proof}

We use this result to construct map $P^{\partial}$ for the following cases.

a) $W_e= F^6$- the space of all LPDE of order $\leq 2$ in two variables.

If $D\langle V\rangle c^{\partial}\langle  V\rangle \gamma^{\partial}\langle V\rangle\neq 0$ then
the first and second equalities of (3.6) imply that the following function \[\chi^{\partial}\langle  \mathbf{x}\rangle=\frac{1}{\gamma^{\partial}\langle \mathbf{x}\rangle}
\left( \begin{array}{l} \alpha^{\partial}\langle  \mathbf{x}\rangle
\\ \beta^{\partial}\langle  \mathbf{x}\rangle \end{array} \right)^t \left(\begin{array}{cc}
x_1 & x_2/2  \\
x_2/2 & x_3
\end{array} \right)\left( \begin{array}{l} \alpha^{\partial}\langle  \mathbf{x}\rangle
\\ \beta^{\partial}\langle  \mathbf{x}\rangle \end{array} \right).\]
is a $F^*\times GL^{\partial}(2,F)$-
invariant function.
Therefore the column $\partial\chi^{\partial}\langle  V\rangle$ has property $\delta\chi^{\delta}\langle  V_1\rangle =g^{-1}\partial\chi^{\partial}\langle  V\rangle$.
It implies that the second order matrix  $P_1^{\partial}\langle  \mathbf{x}\rangle$ consisting of columns $\left( \begin{array}{l} \alpha^{\partial}\langle  \mathbf{x}\rangle
\\ \beta^{\partial}\langle  \mathbf{x}\rangle \end{array} \right)$, $\partial\chi^{\partial}\langle  \mathbf{x}\rangle$ has the property
\[P_1^{g^{-1}\partial}\langle \tau^{\partial}\langle h, g; \mathbf{x}\rangle\rangle=g^{-1}P_1^{\partial}\langle  \mathbf{x}\rangle,\] whenever $(h, g)\in F^*\times GL^{\partial}(2,F)$.
For $P_0^{\partial}\langle  \mathbf{x}\rangle$ one can take $\gamma^{\partial}\langle \mathbf{\mathbf{x}}\rangle^{-1}$ and consider map $P^{\partial}\langle  \mathbf{x}\rangle=(P_0^{\partial}\langle  \mathbf{x}\rangle,P_1^{\partial}\langle  \mathbf{x}\rangle)$ which is defined on  \[W_{0e}=\{V\in W_e: P^{\partial}\langle  \mathbf{x}\rangle\ \mbox{ is defined at}\ V\ \mbox{and}\
\gamma^{\partial}\langle V\rangle\det P_1^{\partial}\langle  V\rangle\neq 0\}.\]

As we have noted before now one can reformulate all Theorems 2.2--2.5 for the case a) automatically. The corresponding results, except for the analogue of Theorem 2.3, are presented in [2].
Note that in the case of a) the $\delta$-transcendence degree of the field $C\langle W_e;\partial\rangle^G$ over $C$ equals 3 due to Theorem 2.6.

b) Due to Theorem 3.1 set $W_e$ of $V\in F^6$ where $\gamma^{\partial}\langle \mathbf{x}\rangle$ vanishes is a $G$-invariant subset of $F^6$. The results of Section 2 can be used to describe the set of all $G$-invariant functions on this set as well. To do so one can represent the function $\gamma^{\partial}\langle \mathbf{x}\rangle$ as an irreducible ratio of two $\partial$-differential polynomials. It is not difficult to see that its  numerator is an irreducible polynomial. Let
 now $W_e$ stand for the set of all $V\in F^6$ where its numerator vanishes.

From the first and second equalities of (3.6) one gets the following $GL(n,F)\times GL^{\partial}(2,F)$-relative
invariant function \[\chi_1^{\partial}\langle \mathbf{x}\rangle=
\left( \begin{array}{l} \alpha^{\partial}\langle  \mathbf{x}\rangle
\\ \beta^{\partial}\langle  \mathbf{x}\rangle \end{array} \right)^t \left(\begin{array}{cc}
x_1 & x_2/2  \\
x_2/2 & x_3
\end{array} \right)\left( \begin{array}{l} \alpha^{\partial}\langle  \mathbf{x}\rangle
\\ \beta^{\partial}\langle  \mathbf{x}\rangle \end{array} \right).\] One can use $\chi_1^{\partial}\langle  \mathbf{x}\rangle^{-1}$  for $P_0^{\partial}\langle  \mathbf{x}\rangle$ as far as it has the property
 $\chi_1^{\partial}\langle  V_1\rangle=h\chi_1^{\partial}\langle  V\rangle$ for any $V\in \{ V: D\langle V\rangle c^{\partial}\langle  V\rangle \neq 0, \gamma^{\partial}\langle V\rangle= 0\}$

Due to the second equality of (3.6) for the row        \[(\alpha_1^{\partial}\langle  V\rangle
,\beta_1^{\partial}\langle  V\rangle)=\frac{1}{\chi_1^{\partial}\langle  V\rangle}(\alpha^{\partial}\langle  V\rangle
,\beta^{\partial}\langle  V\rangle)\left(\begin{array}{cc}
A & B/2  \\
B/2 & C
\end{array} \right)\] the equality \[(\alpha_1^{\delta}\langle  V_1\rangle
, \beta_1^{\delta}\langle  V_1\rangle)=(\alpha_1^{\partial}\langle  V\rangle
, \beta_1^{\partial}\langle  V\rangle )g\]
 holds true. The last equality can be presented in the following form
\[\left( \begin{array}{l}\beta_1^{\delta}\langle  V_1\rangle\\ -\alpha_1^{\delta}\langle  V_1\rangle
\\  \end{array}\right)=\Delta g^{-1}\left( \begin{array}{l}\beta_1^{\partial}\langle  V\rangle\\ -\alpha_1^{\partial}\langle  V\rangle\\ \end{array}\right)
\] So for $P_1^{\partial}\langle  \mathbf{x}\rangle$ one can take the matrix consisting of columns
$\left( \begin{array}{l} \alpha^{\partial}\langle  \mathbf{x}\rangle
\\ \beta^{\partial}\langle  \mathbf{x}\rangle \end{array} \right)$ and \[(\partial^2\frac{a^{\partial}\langle  \mathbf{x}\rangle}{D\langle \mathbf{x}\rangle}- \partial^1\frac{b^{\partial}\langle  \mathbf{x}\rangle}{D\langle \mathbf{x}\rangle})\left( \begin{array}{l}\beta_1^{\partial}\langle  \mathbf{x}\rangle\\ -\alpha_1^{\partial}\langle  \mathbf{x}\rangle\\ \end{array}\right).\]

Now one can consider map $P^{\partial}\langle  \mathbf{x}\rangle=(P_0^{\partial}\langle  \mathbf{x}\rangle,P_1^{\partial}\langle  \mathbf{x}\rangle)$ defined on  \[W_{0e}=\{V\in W_e: P^{\partial}\langle  \mathbf{x}\rangle\ \mbox{ is defined at}\ V\ \mbox{and}\
\det P_1^{\partial}\langle  V\rangle\neq 0\}.\]

In this case due to Theorem 2.6 the $\delta$-transcendence degree of the field $C\langle W_e;\partial\rangle^G$ over $C$ is 2 as far as $\dim(W_e)=5$.

c) In its turn the set of $V\in F^6$ for which  $\chi_1^{\partial}\langle  V\rangle=0$ is a $G$-invariant set. Let $W_e$ stand for the numerators's zeros of $\chi_1^{\partial}\langle  \mathbf{x}\rangle$. It is evident that $\dim W_e=4$. For this case $P^{\partial}$ can be constructed in the following way.

Due to Theorem 3.1 for the row \[(\alpha_2^{\partial}\langle  V\rangle
,\beta_2^{\partial}\langle  V\rangle)=(\alpha^{\partial}\langle  V\rangle
,\beta^{\partial}\langle  V\rangle)\left(\begin{array}{cc}
A & B/2  \\
B/2 & C
\end{array} \right)\] the equality \[(\alpha_2^{\delta}\langle  V_1\rangle
, \beta_2^{\delta}\langle  V_1\rangle)=h(\alpha_2^{\partial}\langle  V\rangle
, \beta_2^{\partial}\langle  V\rangle )g\]
 holds true. The last equality can be presented in the following form
\[\left( \begin{array}{l}\beta_2^{\delta}\langle  V_1\rangle\\ -\alpha_2^{\delta}\langle  V_1\rangle
\\  \end{array}\right)=h\Delta g^{-1}\left( \begin{array}{l}\beta_2^{\partial}\langle  V\rangle\\ -\alpha_2^{\partial}\langle  V\rangle\\ \end{array}\right)
\] which implies that

\[\chi_2^{\partial}\langle  V\rangle=
\left( \begin{array}{l} \beta_2^{\partial}\langle  V\rangle\\ -\alpha_2^{\partial}\langle  V\rangle
\\  \end{array} \right)^t \left(\begin{array}{cc}
A & B/2  \\
B/2 & C
\end{array} \right)\left( \begin{array}{l} \beta_2^{\partial}\langle  V\rangle\\ -\alpha_2^{\partial}\langle  V\rangle
\\  \end{array} \right),\] has property $\chi_2^{\delta}\langle  V_1\rangle=h^3\Delta^2\chi_2^{\partial}\langle  V\rangle$ that is $\frac{\chi_2^{\delta}\langle  V_1\rangle}{D\langle  V_1\rangle}=h\frac{\chi_2^{\partial}\langle  V\rangle}{D\langle  V\rangle}.$ Therefore one can take $\frac{D\langle  \mathbf{x}\rangle}{\chi_2^{\partial}\langle  \mathbf{x}\rangle}$ for $P_0^{\partial}\langle  \mathbf{x}\rangle$ and for $P_1^{\partial}\langle \mathbf{x}\rangle$ one can take the matrix consisting of columns
\[\left( \begin{array}{l} \alpha^{\partial}\langle  \mathbf{x}\rangle\\
 \beta^{\partial}\langle  \mathbf{x}\rangle \end{array} \right)\ \ \mbox{and}\  \ \frac{D\langle \mathbf{x}\rangle(\partial^2\frac{a^{\partial}\langle  \mathbf{x}\rangle}{D\langle \mathbf{x}\rangle}- \partial^1\frac{b^{\partial}\langle  \mathbf{x}\rangle}{D\langle \mathbf{x}\rangle})}{\chi_2^{\partial}\langle  \mathbf{x}\rangle}\left( \begin{array}{l}\beta_1^{\partial}\langle \mathbf{x}\rangle\\ -\alpha_1^{\partial}\langle  \mathbf{x}\rangle \end{array}\right).\]

One can consider $ c^{\partial}\langle  V\rangle =0$ case as well. It is not difficult to see that in this case Theorem 2.6 can be presented in the following form.

\begin{theorem} If $ D\langle V\rangle\gamma_0^{\partial}\langle V\rangle \neq 0$ and $ c^{\partial}\langle  V\rangle =0$ then (1.3) is equivalent to the following system of equalities.
\begin{eqnarray}
\left\{ \begin{array}{l} \left( \begin{array}{cc}
A_{1} & B_{1}/2  \\
B_{1}/2 & C_{1}
\end{array} \right) =h
g^t \left(\begin{array}{cc}
A & B/2  \\
B/2 & C
\end{array} \right) g, \\
\left( \begin{array}{l} \alpha_0^{\delta}\langle V_{1}\rangle    \\
\beta_0^{\delta}\langle V_{1}\rangle \end{array} \right)=
g^{-1}\left( \begin{array}{l} \alpha_0^{\partial}\langle  V\rangle
\\ \beta_0^{\partial}\langle  V\rangle \end{array} \right),\\
\gamma_0^{\delta}\langle V_1\rangle=h\gamma_0^{\partial}\langle V\rangle
,\end{array}\right.
\end{eqnarray}
 where
$\left( \begin{array}{l} \alpha_0^{\partial}\langle  V\rangle
\\ \beta_0^{\partial}\langle  V\rangle \end{array} \right)=\left(\begin{array}{l}\frac{a^{\partial}\langle  V\rangle}{D\langle V\rangle}\\ \frac{b^{\partial}\langle  V\rangle}{D\langle V\rangle}\end{array}\right)+
\frac{1}{\gamma_0^{\partial}\langle V\rangle}\partial \gamma_0^{\partial}\langle V\rangle.$\end{theorem}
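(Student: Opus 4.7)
The plan is to invoke Theorem~3.1 in the case $c^{\partial}\langle V\rangle = 0$ and then rewrite the second equation of the resulting system (3.1) by eliminating the logarithmic derivative of $h$. Under the hypotheses $D\langle V\rangle \neq 0$ and $c^{\partial}\langle V\rangle = 0$, Theorem~3.1 already tells us that (1.3) is equivalent to the matrix equation for the symmetric part, the vector equation
\[
\left( \begin{array}{l}\tfrac{a^{\delta}\langle V_1\rangle}{D\langle V_1\rangle}\\ \tfrac{b^{\delta}\langle V_1\rangle}{D\langle V_1\rangle}\end{array}\right)=g^{-1}\left( \begin{array}{l}\tfrac{a^{\partial}\langle V\rangle}{D\langle V\rangle}- h^{-1}\partial^1h\\ \tfrac{b^{\partial}\langle V\rangle}{D\langle V\rangle} - h^{-1}\partial^2h\end{array}\right),
\]
and the scalar equation $\gamma_0^{\delta}\langle V_1\rangle=h\gamma_0^{\partial}\langle V\rangle$. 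The matrix and scalar equations coincide with the first and third equations of (3.7), so the only work is to show that, under the present hypotheses, the vector equation above is equivalent to the second equation of (3.7).

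The key step exploits the nonvanishing of $\gamma_0^{\partial}\langle V\rangle$: the scalar equation gives $h = \gamma_0^{\delta}\langle V_1\rangle / \gamma_0^{\partial}\langle V\rangle$, and differentiating the logarithm yields
\[
h^{-1}\partial^i h = \frac{\partial^i \gamma_0^{\delta}\langle V_1\rangle}{\gamma_0^{\delta}\langle V_1\rangle} - \frac{\partial^i \gamma_0^{\partial}\langle V\rangle}{\gamma_0^{\partial}\langle V\rangle}.
\]
Converting $\partial$-derivatives of $\gamma_0^{\delta}\langle V_1\rangle$ into $\delta$-derivatives via $\partial = g\delta$ (the chain rule is $\partial^i = \sum_j g^i_j \delta^j$) gives the column identity
\[
h^{-1}\partial h = g\bigl(\gamma_0^{\delta}\langle V_1\rangle^{-1}\delta\gamma_0^{\delta}\langle V_1\rangle\bigr) - \gamma_0^{\partial}\langle V\rangle^{-1}\partial\gamma_0^{\partial}\langle V\rangle.
\]

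Substituting this into the vector equation and moving the $\delta$-term to the left produces
\[
\left( \begin{array}{l}\tfrac{a^{\delta}\langle V_1\rangle}{D\langle V_1\rangle}\\ \tfrac{b^{\delta}\langle V_1\rangle}{D\langle V_1\rangle}\end{array}\right) + \gamma_0^{\delta}\langle V_1\rangle^{-1}\delta\gamma_0^{\delta}\langle V_1\rangle = g^{-1}\!\left[\left( \begin{array}{l}\tfrac{a^{\partial}\langle V\rangle}{D\langle V\rangle}\\ \tfrac{b^{\partial}\langle V\rangle}{D\langle V\rangle}\end{array}\right) + \gamma_0^{\partial}\langle V\rangle^{-1}\partial\gamma_0^{\partial}\langle V\rangle\right],
\]
which by the very definition of $\alpha_0^{\partial},\beta_0^{\partial}$ is precisely the second equation of (3.7). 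The converse implication is immediate from running the same computation backwards: the third equation of (3.7) recovers $h$, the chain-rule identity above recovers $h^{-1}\partial h$, and the second equation of (3.7) then rearranges into the original vector equation of Theorem~3.1.

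The main obstacle I anticipate is the careful bookkeeping in converting $\partial$-derivatives of the composite object $\gamma_0^{\delta}\langle V_1\rangle$ (which depends on $\delta$ both as its own argument and through $V_1$) into $g$ times its $\delta$-derivatives; once this mixed-derivative identity is secured, the rest is an algebraic rearrangement that is legitimate precisely because $\gamma_0^{\partial}\langle V\rangle$ is invertible.
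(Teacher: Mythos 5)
Your proof is correct and follows essentially the route the paper intends: the paper states this theorem without an explicit proof (``it is not difficult to see''), and your argument --- invoking the $c^{\partial}\langle V\rangle=0$ branch of Theorem 3.1 and using $\gamma_0^{\delta}\langle V_1\rangle=h\,\gamma_0^{\partial}\langle V\rangle$ together with $\partial=g\delta$ to eliminate $h^{-1}\partial h$ from the second equation of (3.1) --- is the exact analogue of the paper's proof of the companion theorem for $c^{\partial}\langle V\rangle\neq 0$, where $\Delta$ is eliminated via the relative invariant $D_0^{\partial}$. The chain-rule difficulty you flag at the end is harmless, since $\gamma_0^{\delta}\langle V_1\rangle$ is just an element of $F$ and $\partial f=g\,\delta f$ holds for every $f\in F$ by the definition $\delta=g^{-1}\partial$.
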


d) Now consider the set $W_e$ of all $V\in F^6$ where the numerator of $ c^{\partial}\langle  \mathbf{x}\rangle $
vanishes. The dimension of it is 5.

 From the first and second equalities of (3.7) one gets the following $G$-
invariant function \[\chi_0^{\partial}\langle  \mathbf{x}\rangle=\frac{1}{\gamma_0^{\partial}\langle \mathbf{x}\rangle}
\left( \begin{array}{l} \alpha_0^{\partial}\langle  \mathbf{x}\rangle
\\ \beta_0^{\partial}\langle  \mathbf{x}\rangle \end{array} \right)^t \left(\begin{array}{cc}
x_1 & x_2/2  \\
x_2/2 & x_3
\end{array} \right)\left( \begin{array}{l} \alpha_0^{\partial}\langle  \mathbf{x}\rangle
\\ \beta_0^{\partial}\langle  \mathbf{x}\rangle \end{array} \right).\]

Therefore $\partial\chi_0^{\partial}\langle  \mathbf{x}\rangle$ has property $\delta\chi_0^{\delta}\langle  V_1\rangle =g^{-1}\partial\chi_0^{\partial}\langle  V\rangle$.
It implies that second order matrix  $P_1^{\partial}\langle  \mathbf{x}\rangle$ consisting of columns $\left( \begin{array}{l} \alpha_0^{\partial}\langle  \mathbf{x}\rangle
\\ \beta_0^{\partial}\langle  \mathbf{x}\rangle \end{array} \right)$, $\partial\chi_0^{\partial}\langle  \mathbf{x}\rangle$ has the property
\[P_1^{g^{-1}\partial}\langle \tau^{\partial}\langle h, g; \mathbf{x}\rangle\rangle=g^{-1}P_1^{\partial}\langle  \mathbf{x}\rangle,\] whenever $(h, g)\in F^*\times GL^{\partial}(2,F)$.
For the $P_0^{\partial}\langle  \mathbf{x}\rangle$ one can take $\gamma_0^{\partial}\langle \mathbf{x}\rangle^{-1}$ and for the domain of the corresponding $P^{\partial}$ the set
\[W_{0e}=\{ V\in W_e: P^{\partial}\langle  \mathbf{x}\rangle\ \mbox{ is defined at}\ V\ \mbox{and}\
\det P_1^{\partial}\langle  V\rangle\neq 0\}.\] In this case $\delta-tr.deg. C\langle W_e;\partial\rangle^G/C= 2.$

e) The last case of non-parabolic LPDE we want to consider is $ c^{\partial}\langle  V\rangle =\gamma_0^{\partial}\langle V\rangle= 0$ case.
First of all due to $ c^{\partial}\langle  V\rangle = 0$ there exists $h^0$ such that $(h^0)^{-1}\partial h^0=\left( \begin{array}{l}\frac{a^{\partial}\langle  V\rangle}{D\langle V\rangle}\\ \frac{b^{\partial}\langle  V\rangle}{D\langle V\rangle}\\ \end{array}\right)$. Assume $A\neq 0$ and consider $g^{0}=\left(\begin{array}{cc}
\lambda_1g^0_{21} & \lambda_2g^0_{22}  \\
g^0_{21} & g^0_{22}
\end{array} \right)$, $\partial^0=(g^0)^{-1}\partial$ for which $\partial^2(\lambda_ig^0_{2i})=\partial^1g^0_{2i}$, $i=1,2$ holds true, where $\lambda_1, \lambda_2$ are roots of the equation $A\lambda^2+B\lambda+C=0.$

One has $(0,B^0,0,0,0,0)=\tau^{\partial}\langle h^0, g^0; V\rangle$. In other words such equations are equivalent (reducible) to $Bu_{xy}=0$ type equations. It is not difficult to see that the set $W_e=\{V=(0,B,0,0,0,0): B\in F\}$ is invariant with respect to $(h,g)\in F^*\times GL^{\partial}(2, F)$ if and only if $h\in C^*$ and all main or bias diagonal elements of $g$ are zeros. Therefore for simplicity we describe here the field of invariants of $Bu_{xy}=0$ type equations with respect to $(h,g)\in F^*\times GL^{\partial}(2, F)$ for which $h\in C^*$ and all main or bias diagonal elements of $g$ are zeros. It is easy to check that function $f^{\partial}\langle B\rangle=B^{-2}(B^{-1}B_x)_y$ has property $f^{\delta}\langle B_1\rangle=h^2f^{\partial}\langle B\rangle$ and therefore for $f_1^{\partial}\langle B_1\rangle=(\partial f^{\partial}\langle B\rangle)^t\left( \begin{array}{cc}
0 & B/2  \\
B/2 & 0\end{array} \right)\partial f^{\partial}\langle B\rangle$ the equality $f_1^{\delta}\langle B_1\rangle=h^5f_1^{\partial}\langle B\rangle$ holds true. So for $P_0^{\partial}\langle B\rangle$ one can take $\frac{f^{\partial}\langle B\rangle^2}{f_1^{\partial}\langle B\rangle}$. For $P_1^{\partial}\langle B\rangle$ the matrix with columns  $f^{\partial}\langle B\rangle^{-1}\partial f^{\partial}\langle B\rangle$ and $f_1^{\partial}\langle B\rangle^{-1}\partial f_1^{\partial}\langle B\rangle$ can be taken. In this case the $\delta$-transcendence degree of the corresponding invariant rational functions over $C$ is 1.

{\bf Conclusion.} The paper deals with classification and invariants problems of LPDE which have not been much explored before. It offers a general method to classification and invariants problems of such equations. An constructive application of the method is presented in the case of second order non-parabolic LPDEs. A criterion for reducibility of such equations to LPDE with constant coefficients is offered.

\end{document}